\pgfplotsset{compat=1.17}
\newtheorem{lemma}{Lemma}
\newtheorem{proposition}{Proposition}
\newtheorem{theorem}{Theorem}
\newtheorem{remark}{Remark}
\newcommand{\rJ}{\mathrm{J}}
    \journal{}
\title{Canards in a bottleneck}
\author[label1]{Annalisa Iuorio}\ead{annalisa.iuorio@univie.ac.at}
\author[label2]{Gaspard Jankowiak}\ead{gaspard@math.janko.fr}
\author[label3]{Peter Szmolyan}\ead{peter.szmolyan@asc.tuwien.ac.at}
\author[label4]{Marie-Therese Wolfram}\ead{m.wolfram@warwick.ac.uk}
\affiliation[label1]{organization={University of Vienna, Faculty of Mathematics},
            addressline={Oskar-Morgenstern-Platz 1}, 
            city={Vienna},
            postcode={1090},
            country={Austria}}
\affiliation[label2]{organization={Universität Konstanz, Fachbereich Mathematik und Statistik},
            addressline={Fach D 197}, 
            city={Konstanz},
            postcode={78457},
            country={Germany}}
\affiliation[label3]{organization={Technische Universität Wien, Institute for Analysis and Scientific Computing},
            addressline={Wiedner Hauptstr. 8-10},
            city={Vienna},
            postcode={1040},
            country={Austria}}
\affiliation[label4]{organization={University of Warwick, Mathematics Institute},
            city={Coventry},
            postcode={CV47AL},
            country={UK}}
\begin{document}
\listoffixmes{}

\begin{frontmatter}


\begin{abstract}
In this paper we investigate the stationary profiles of a nonlinear Fokker-Planck equation with small diffusion and nonlinear
in- and outflow boundary conditions.
We consider corridors with a bottleneck whose width has a global nondegenerate minimum in the interior. In the small diffusion limit the profiles are obtained constructively by using methods from geometric singular perturbation theory (GSPT). We identify three
main types of profiles corresponding to: (i) high density in the domain and a boundary layer at the entrance, (ii) low density in the domain and a boundary layer at the exit, and (iii) transitions from high density to low density inside the bottleneck with boundary layers at the entrance and exit.
Interestingly, solutions of the last type involve canard solutions generated at the narrowest point of the bottleneck.
We obtain a detailed bifurcation diagram of these solutions 
in terms of the in- and outflow rates. 
The analytic results based on GSPT are further corroborated by computational experiments investigating corridors with bottlenecks of variable width.

\end{abstract}

\end{frontmatter}

\section{Introduction}

In this paper we investigate the stationary profiles of a nonlinear Fokker-Planck equation with inflow and outflow boundary conditions, describing the unidirectional cross-sectional average flow of pedestrians in corridors with a single entrance and exit. Changes in the cross section lead to an in- or decrease of the possible flow inside the corridor; different in- and outflow conditions   to the formation of boundary layers at the entrance and exit. In \cite{Iuorio_2022} the authors derived the investigated 1D area averaged model from a nonlinear convection diffusion equation that was originally proposed by Burger and Pietschmann in \cite{burger_flow_2016}. They studied the formation of boundary layers in the case of strictly monotone cross sectional profiles using geometric singular perturbation theory (GSPT). In this paper we extend our analysis to corridors  with a 
unique point of minimal width which we denote as bottlenecks in the 
following.

There has been an increased interest in the analysis of PDE models for pedestrian flows within the applied mathematics community in the last years. These models usually describe the dynamics of a single group of pedestrians having a common goal; for example unidirectional flows in corridor; or several groups with different objectives; as in bidirectional flows see \cite{BMP2011, BHRW2016}. The resulting PDEs or systems of PDEs are usually highly nonlinear and coupled. In addition to nonlinear boundary conditions, convection dominated terms as well as nonlinear interaction terms require the use of non-standard analytical and computational techniques to show existence of solutions, analyse their long time behavior and perform computational experiments. Stationary profiles of these PDE models provide useful insights into the complex dynamics and allow to predict seggregation dynamics (in the case of multi-species flows) or the formation of boundary layers or high density regions (in the case of low or high inflow and outflow rates or at bottlenecks), see for example \cite{BHRW2016, burger_flow_2016}. For a general overview on mathematical modeling, analysis and simulation we refer to \cite{CPT2014,MS2018}. \\

We reiterate that the investigated PDE model for area averaged flows comprises a nonlinear convection and linear diffusion term, as well as nonlinear in- and outflow at the entrance and exit. The interplay of small diffusion, the geometry of the domain as well as the in- and outflow rates lead to the formation of boundary layers, which we analyse using Geometric Singular Perturbation Theory (GSPT).\\

\noindent 
GSPT is a dynamical systems approach to singularly perturbed ordinary differential equations
started by the pioneering work of Fenichel \cite{Fenichel_1979}. The most common form of GSPT
considers slow-fast systems of the form
\begin{equation} \label{eq:slow}
 \begin{aligned}
  \dot{u} &= f(u,v), \\
  \varepsilon \dot{v} &= g(u,v),
 \end{aligned}
\end{equation}
where $u$ and $v$ are functions of $t$ and $0 < \varepsilon \ll 1$.
Often $t$ has the interpretation of time but it may represent equally well a spatial variable.
For $f =O(1)$ and $g =O(1)$ the variable $u$ varies on the slow time-scale $t$ and the variable $v$ on the fast time-scale
$\tau := \frac{t}{\varepsilon}$, which explains the name slow-fast system.
Written on the fast time-scale the equation has the form 
\begin{equation} \label{eq:fast}
 \begin{aligned}
  u' &= \varepsilon f(u,v), \\
  v' &= g(u,v).
 \end{aligned}
\end{equation}
Under suitable assumptions, solutions of System (\ref{eq:slow}) 
for small values of $\varepsilon$ can be constructed as perturbation of concatenations of solutions of the two limiting problems obtained by setting
$\varepsilon =0$ in systems (\ref{eq:slow}) and (\ref{eq:fast}), which are referred to as the
reduced problem and the layer problem, respectively.
In GSPT, these constructions are carried out in the framework of dynamical systems theory;
with the theory of invariant manifolds playing a particularly important role.
In the specific problem analysed in this paper, well established results and methods from GSPT are used and adapted for the analysis of a boundary value problem.

Therefore, we do not give a more detailed summary of GSPT, but refer to \cite{Jones_1995, Kuehn_2015} for more background on GSPT and its many applications. To name a few recent applications we mention the analysis of multi-scale structures in Micro-Electro-Mechanical Systems \cite{Iuorio_MEMS}
and in vegetation patterns \cite{Iuorio_2021}.
In the context of pedestrian dynamics, GSPT has been successfully applied to study closing channels in \cite{Iuorio_2022}.

The necessary concepts and results from GSPT are explained in Section 2 as needed in the context of the specific problem at hand.

\subsection{The mathematical model}

In the following we briefly discuss the underlying modeling assumption of the area averaged PDE under investigation. A more detailed derivation can be found in \cite{Iuorio_2022}.\\
We consider a undirectional flow of a large pedestrian crowd, whose density if given by $\rho = \rho(x,y,t)$, in a 2D domain with a single entrance (at $x=0$) and a single exit (at $x=L$). Furthermore we assume that the pedestrian density is constant across the cross section, that is for fixed $x$. This assumption is satisfied if
\begin{itemize}
\item the domain is symmetric with respect to the $x$-axis, and
\item the initial pedestrian distribution is symmetric with respect to the $x$-axis.
\end{itemize}
We assume that the dynamics are driven by convective transport and diffusion, in particular the total normalised pedestrian flow is given by
\begin{align}
    \label{eq:flux}
    \mathbf{j} = -\varepsilon \nabla \rho + \rho (1-\rho) \mathbf{u},
\end{align}
where $\mathbf{u}: \mathbb{R}^2 \rightarrow \mathbb{R}$ is a normalised vector field in the desired direction (in our case pointing in the general direction of the exit) and $\varepsilon > 0$ is the diffusion coefficient. We see that the average velocity corresponds to $1-\rho$, hence individuals move at maximum speed $1$ at density $\rho\equiv 0$ and vanishes if the density reaches its maximum value $\rho\equiv 1$. Note that the relation of the average density to the average velocity is commonly referred to as the fundamental diagram, and that similar relations have been investigated in traffic flow; consider for example the well known Lighthill-Whitham-Richard model \cite{lighthill1955kinematic, richards1956shock}.\\

In \cite{Iuorio_2022} the authors derived a 1D area averaged PDE model, which is based on the above assumptions and a suitable rescaling in space. It reads as
\begin{subequations}
\label{eq:1dareaaveraged}
\begin{align}
\label{eq:1edfpe}
    \partial_t \rho(x,t) = \partial_x \left(k(x)(-\varepsilon \partial_x \rho(x,t) + \rho(x, t)\, (1-\rho(x,t)) )\right) = 0
\end{align}
The 
equation is supplemented with in- and outflow conditions 
\begin{align}
j(0,t) &= \alpha (1-\rho(0,t))\label{eq:in}\\
j(L,t) &= \beta \rho(0,t) \label{eq:out}
\end{align}
\end{subequations}
where $j(x, t) = -\varepsilon \partial_x \rho(x,t) + \rho(x,t) \, (1-\rho(x,t)) $
is the 1D equivalent of \eqref{eq:flux}.
In the derivation of \eqref{eq:1dareaaveraged}, the function $k$ is the product of the width with the cross-sectional average of first component of $\mathbf{u}$. For simplicity we refer to $k$ as the width of the bottleneck, which amounts to assuming that the cross-sectional average is 1. The parameters $\alpha > 0$ and $\beta > 0$ are the inflow and outflow rate, respectively. \\
The boundary condition \eqref{eq:in} describes the inflow at the entrance; the inflow is maximal if the entrance is empty ($\rho \equiv 0$), but decreases to zero when approaching the maximum density $\rho \equiv 1$. At the exit \eqref{eq:out} we do not assume that the outflow is limited by the maximum capacity. Hence, the outflow rate is proportional to the density of individuals at the exit.\\
In \cite{Iuorio_2022} the existence of a unique stationary solution of 
(\ref{eq:1dareaaveraged}) has been established in great generality by PDE methods, thus, it remains to understand its structure and dependence on parameters.

\subsection{Content and organisation of the paper}

In this paper we continue and extend the analysis of stationary profiles for system \eqref{eq:1dareaaveraged} in \cite{Iuorio_2022}, where a detailed analysis of 
stationary profiles and their dependence on the in- and outflow rates $\alpha$ and $\beta$ was given for corridors with monotonically decreasing (or increasing) functions $k$.
Recall, that smaller values of the function $k$ account for reduced mobility 
in narrower regions. It was shown in \cite{Iuorio_2022} that the nonlinear in- and outflow conditions lead to the formation of boundary layers at the entrance or exit.

Building on the approach in \cite{Iuorio_2022} we now consider the important case of corridors, whose width has a unique minimum. This setting corresponds to functions $k$ which have a unique global minimum at $x=x_0 \in (0,L)$. In the following we refer to domains of this type as corridors with bottlenecks (or sometimes only bottlenecks). Throughout this paper we will without loss of generality, assume that $L=1$. Therefore, we investigate the stationary states of system \eqref{eq:1dareaaveraged} described by
\begin{subequations}
\begin{equation}
    \partial_x \rJ = \partial_x(k(x) j(x)) = 0\,,
    \label{eq:reduced equation rho}
\end{equation}
where $j(x) = -\varepsilon \partial_x \rho(x) + \rho(x) \, (1-\rho(x)) $ coupled with the following boundary conditions
\begin{equation}
\begin{aligned}
    j &= \alpha \left(1 - \rho\right)
 & \text{ at } x &= 0\,,
 \\
    j &= \beta \rho
 & \text{ at } x &= 1\,.
\end{aligned}
\label{eq:boundary conditions reduced rho}
\end{equation}
\label{eq:reduced equation}
\end{subequations}
We characterise all profiles for different inflow and outflow rates $\alpha$ and $\beta$ in the singular limit $\varepsilon =0$. We identify 8 regions in parameter space corresponding
to profiles with different structures. Two of these regions correspond to high-density
profiles, two other regions correspond to low density profiles. These profiles
are quite similar to profiles considered in \cite{Iuorio_2022} and are only weakly affected
by the presence of the bottleneck. Due to the bottleneck a new  interesting class of  profiles exists, which corresponds
to solutions starting at high density and making a transition to low density in the region where the function $k$ attains its minimum. Since this type of solutions allows four possible configurations of boundary layers this leads to four types of transitional density profiles. We refer to these four types of profiles as transitional profiles.

Our GSPT analysis shows that these transitional profiles are caused by the existence of canard solutions
passing through a folded saddle \cite{Szmolyan_2001}. 
Canard solutions are solutions of singularly perturbed ODEs which
follow repelling slow manifolds for a considerable time. The essence of the canard phenomenon
is that these solutions lie exponentially close to the repelling slow manifold and
are therefore able to follow it for some time before they are ultimately repelled from it.
Clearly, special mechanisms are needed to bring solutions of interest exponentially close
to the repelling slow manifold.
The occurrence of canard solutions in boundary value problems
is conceptually less surprising than their occurrence in initial value problems, 
nevertheless we are not aware of similar works or results in the context of 
nonlinear boundary value problems.

The three profile types have a similar structure as the low density, high density and maximum current phases observed in Totally Asymmetric Simple Exclusion Process (TASEP). Note that the proposed model \eqref{eq:1dareaaveraged} was derived from a 2D TASEP, see \cite{burger_flow_2016}. In the TASEP, $\alpha$ and $\beta$ are the entry and exit rates, respectively, see for example \cite{derrida_exact_1992,wood_totally_2009}. \\


The rest of the paper is organised as follows. The GSPT analysis leading to the main result
on the structure of solutions is carried out in Section \ref{sec:gspt}.
In Section \ref{sec:numerics} the analytical results are illustrated and
confirmed by computational experiments for different channels. We conclude with an interpretation of the main features of the constructed solutions in the various regimes
in a manner which could be useful in further studies of pedestrian dynamics.   

\section{GSPT analysis}
\label{sec:gspt}

In this section, the stationary states associated to \eqref{eq:reduced equation} are investigated in a bottleneck scenario. The problem is rewritten as an equivalent boundary
value problem for an autonomous three-dimensional system of
first order differential equations in slow-fast form.
As explained in the introduction, we will identify  8 regions 
in the $(\alpha, \beta)$ parameter space, in which the stationary profiles have the same structure in the singular limit $\varepsilon =0$. We construct singular solutions of the boundary value problem as concatenations of solutions of the corresponding layer- and reduced problem. These singular solutions are then shown to persist for $\varepsilon$ small.
The profiles which exist in four of these regions involve 
a canard solution generated at a point corresponding to
the minimum of $k$.

For the rest of this paper we make the following assumption which is crucial for our approach and results.\\[1mm]
{\bf Main Assumption:} The function $k \in C^2([0,1])$ is 
positive and has a unique global nondegenerate minimum at $x = x^\ast \in (0,1)$ satisfying
\begin{equation} \label{k-assumption}
k'(x^\ast) = 0, \qquad k''(x^\ast) >0.
\end{equation}
\begin{remark}
Here we denote the derivative of the coefficient function $k$ as $k'$. Below we will also consider the function $g := k'/k$, its derivative will also be denoted as $g'$.
We would like to point out that starting with Equation \eqref{eq:sys_fast} the symbol $'$ will be mainly used to denote derivatives of the sought solution with respect to a rescaled fast variable. The above slight use of notations should not lead to any confusion.
\end{remark}



By introducing the function
\[
 g :=\frac{k'}{k}
\]
we can rewrite Equation \eqref{eq:reduced equation} as the system
\begin{equation} \label{eq:gform}
\begin{aligned}
 \frac{dj}{dx} &= -g(x)j, \\
 \varepsilon\frac{d\rho}{dx} &= \rho(1-\rho)-j.
\end{aligned}
\end{equation}
Analogously to \cite{Iuorio_2022}, this system can be transformed into an autonomous system by introducing the variable $\xi=x$ as a new dynamic variable and including the trivial equation $\frac{d \xi}{dx}=1$. From now on, we use the notation $\dot{~} = \frac{d}{dx}$. Thus, we obtain the following autonomous reformulation of \mbox{Equation \eqref{eq:gform}} 
\begin{equation} \label{eq:sys}
\begin{aligned}
 \dot{j} &= - g(\xi) j, \\
 \dot{\xi} &= 1, \\
 \varepsilon \dot{\rho} &= \rho(1-\rho)-j,
\end{aligned}
\end{equation}
where the above assumptions on $k$ identically apply with $\xi^\ast = x^\ast$,
with boundary conditions
\begin{equation} \label{eq:bc}
\begin{aligned}
    j &= \alpha \left(1 - \rho\right)
 & \text{ at } \xi &= 0\,,
 \\
    j &= \beta \rho
 & \text{ at } \xi &= 1\,.
\end{aligned}
\end{equation}
System \eqref{eq:sys} is a slow-fast system, where the dynamics of $\rho$ occur on the fast scale,
while the dynamics of $j$ and $\xi$ take place on the slow scale. 
By transforming to the fast variable $\chi = \frac{x}{\varepsilon}$, and using the notation $'=\frac{d}{d \chi}$, we can rewrite System \eqref{eq:sys} as
\begin{equation} \label{eq:sys_fast}
 \begin{aligned}
  j' &= -\varepsilon g(\xi) j, \\
  \xi' &= \varepsilon, \\
  \rho' &= \rho(1-\rho)-j.
 \end{aligned}
\end{equation}
As explained in the introduction, letting $\varepsilon \to 0$ in Equations \eqref{eq:sys} and \eqref{eq:sys_fast} leads to two limiting subproblems -- i.e.~the \emph{reduced} problem and the \emph{layer} problem, respectively -- which are simpler to analyse. The layer problem ($\varepsilon=0$ in \eqref{eq:sys_fast}) is given by
\begin{equation} \label{eq:laypb_k}
 \begin{aligned}
  j' &= 0, \\
  \xi' &= 0, \\
  \rho' &= \rho(1-\rho)-j, 
 \end{aligned}
\end{equation}
and describes the dynamics of the fast variable $\rho$ for fixed $j$ and $\xi$ values. The manifold of its equilibria is known as the \emph{critical manifold}
 \begin{equation} \label{eq:crit_man}
 \mathcal{C}_0 := \left\{ (j,\xi,\rho)~:~j=\rho(1-\rho) \right\},
 \end{equation}
which is a folded surface  in $(j,\xi,\rho)$ space.
The critical manifold $ \mathcal{C}_0  $ is the union of two submanifolds $\mathcal{C}_0^a$ ($\rho > \frac12$) and $\mathcal{C}_0^r$ ($\rho < \frac12$) -- which are attracting and repelling, respectively -- and a line of fold points
 \begin{equation} \label{eq:foldL}
  F := \left\{ (j,\xi,\rho)~:~j=\frac14,~\rho=\frac12 \right\},
 \end{equation}
as shown in Figure \ref{fig:C0_ff}. 
Fenichel Theory \cite{Fenichel_1979} implies that away from the fold line $F$ the 
submanifolds $\mathcal{C}_0^a$ and $\mathcal{C}_0^r$ 
perturb to (non-unique) attracting and repelling slow manifolds  
$\mathcal{C}_{\varepsilon}^a$ and $\mathcal{C}_{\varepsilon}^r$ 
for $\varepsilon$ small.

If the reduced flow reaches the fold line $F$ transversally at a point $p \in F$, the point $p$  is a jump point where a transition to fast motion close to solutions of the layer problem occurs, see \cite{Szmolyan_2004}. At exceptional points $p \in F$  where this transversality condition is violated
solutions of the reduced flow may cross through $p$ from 
$\mathcal{C}_0^a$ to $\mathcal{C}_0^r$, or vice versa. Such solutions are called (singular) canards, the corresponding $p \in F$ is a canard point. The least degenerate canard points
have been classified and analysed by the blow-up method 
as folded saddles and folded nodes in \cite{Szmolyan_2001}. There it is shown that these (singular) canards persist as canard solutions, i.e. solutions 
corresponding to intersections of the slow manifolds $\mathcal{C}_{\varepsilon}^a$ and $\mathcal{C}_{\varepsilon}^r$ 
near $p$ for $\varepsilon$ small. Thus, the existence of
canard solutions provides a mechanism that solutions lying in (or exponentially close to)  the attracting slow manifold $\mathcal{C}_{\varepsilon}^a$ can be continued in (or exponentially close to) the repelling slow manifold $\mathcal{C}_{\varepsilon}^r$. The less counter-intuitive situation
that solutions lying in the repelling slow manifold can be continued in (or close to) the attracting slow manifold is also 
possible. Canard solutions of this second type are often referred to as faux canards.

\begin{figure}[htb]
    \centering
  	\begin{overpic}[scale=.5]{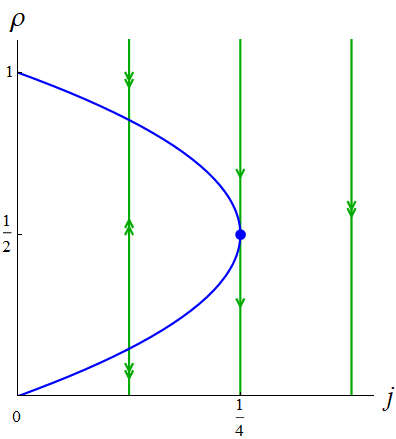}
  	\put(40,20){\footnotesize $\mathcal{C}_0^r$}
  	\put(40,70){\footnotesize $\mathcal{C}_0^a$}
  	\put(58,45){\footnotesize $F$}
    \end{overpic}
    \caption{Fast dynamics in $(j,\rho)$-space for a fixed value of $\xi$. The blue curve represents $\mathcal{C}_0$ 
    consisting of the  two branches $\mathcal{C}_0^a$ (attracting), $\mathcal{C}_0^r$ (repelling), and the fold line $F$. 
    The green lines indicate orbits of the layer problem \eqref{eq:laypb_k}, while the blue dot represents the line of fold points $F$.}
    \label{fig:C0_ff}
 \end{figure}
 
In the following we analyse the reduced flow on $F$. We will show that a canard point of folded saddle type occurs at the point 
\begin{equation} \label{eq:P}
 p^\ast=\left( \frac14, \xi^\ast, \frac12 \right)
\end{equation}
where $\xi^\ast$ is the location of the global minimum of the function $k$.

The reduced problem is very simple
\begin{subequations} \label{eq:k_redpb}
 \begin{align}
  \dot{j} &= -g(\xi) j, \label{eq:kj_const} \\
  \dot{\xi} &= 1.
 \end{align}
\end{subequations}
The phase space for the reduced problem is $[0, 1/4] \times [0,1]$, where $j= 1/4$ corresponds to the fold line.
It follows from Equation \eqref{eq:reduced equation rho} that
$k(\xi)\, j$ is a conserved quantity, hence the level lines of this function give the orbits of the reduced problem (\ref{eq:k_redpb}).

However, as always for folded critical manifolds, the classification of the reduced flow -- in particular at the fold line and at canard points -- is more conveniently carried out in the variables $(\xi, \rho)$  by using the constraint $j = \rho(1 - \rho)$ which defines 
$\mathcal{C}_0$.
Differentiating the constraint with respect to $x$ gives 
$\dot{j} = (1-2 \rho) \dot{\rho}$, which allows to rewrite the reduced problem as 
\begin{equation} \label{eq:k_redpb_xrho}
\begin{aligned}
    \dot{\xi} &= 1, \\
    (1-2 \rho) \, \dot{\rho} &= -g(\xi) \, \rho (1-\rho).
\end{aligned}
\end{equation}
with $\xi \in [0,1]$ and $\rho \in [0,1]$.
System \eqref{eq:k_redpb_xrho} is singular at the fold line $F$, i.e.~for $\rho=1/2$. This system can be desingularised 
by multiplying the right hand-side by $1-2\rho$
and dividing out this factor in the $\rho$ equation.
This gives the desingularised reduced system 
\begin{equation} \label{eq:red-desing}
\begin{aligned}
    \dot{\xi} &= 1 - 2 \rho, \\
    \dot{\rho} &= -g(\xi) \, \rho (1-\rho).
\end{aligned}
\end{equation}
This multiplication of the right hand side by $(1 - 2 \rho)$ corresponds to a position dependent rescaling of the independent variable $x$,
which does not change orbits of the system away from the fold line.
However, for $\rho > 1/2$ the flow direction is reversed, which needs 
to be taken into account.

\medskip
We now collect the properties of the reduced problem, which are needed in the analysis of the boundary value problem \eqref{eq:sys}-\eqref{eq:bc}. 
These properties depend on properties of the function $g = k'/k$. Our main Assumption implies that the global nondegenerate minimum of $k$ at $\xi^\ast$ is a simple zero
of $g$ corresponding to a saddle point $(\xi^\ast, 1/2)$ of the
desingularised reduced problem.  Other zeros of $g$ lead to 
additional equilibria, which are discussed only briefly, since
we show later that these play no role in the analysis of the
boundary value problem.


\begin{lemma}\label{lem:reducedflow}
The reduced problem  \eqref{eq:k_redpb_xrho} has the following properties:
\begin{enumerate}
\item The phase portrait is symmetric with respect to the line $\rho = 1/2$, which corresponds to the fold line $F$.
\item The variable $\xi$ is increasing along all orbits, i.e. the flow is from left to right.
\item The lines $\rho =0$ and $\rho=1$ are invariant.
\item  In regions with $g(\xi) >0$ the variable $\rho$ is decreasing along orbits for $1/2 < \rho< 1$ and is increasing for $0 < \rho < 1/2$.
In regions with $g(\xi)  < 0$ this monotonicity is reversed. The variable $\rho$ is constant in regions with $g(\xi) = 0$, corresponding to regions where
the width of the corridor is constant.
\item  The line $\rho =1/2$ is  a line of singularities. Points $(\xi, 1/2)$ with $g(\xi) > 0$ are reached in finite time by the forward flow and the derivative 
$\dot{\rho}$ blows up there. Similarly, points $(\xi, 1/2)$ with $g(\xi) < 0$ are reached in finite time by the backward flow.
\item
The point $ p^\ast =\left( \xi^\ast, \frac12 \right)$ is a canard point of folded saddle type.
\item There exist two (symmetric with respect to the line $\rho = 1/2$)  singular canard solutions with orbits $S_c$ and $\tilde{S}_c$ passing smoothly through the singularity  located at $p^\ast$.
The canard $S_c$ crosses from the attracting part of the critical manifold to the repelling one,
the (faux) canard $\tilde{S}_c$ crosses from the repelling part of the critical manifold to the attracting one.
\item The (faux) canard orbit $\tilde{S}_c$ starts at $\xi=0$, $\rho = \rho_c^0 \in (0, 1/2)$ and reaches $\xi=1$ at $\rho =\rho_c^1 \in (0,1/2)$.
The  canard orbit $S_c$  starts at  $\xi=0$, $\rho = 1 -  \rho_c^0 \in (1/2,1)$ and reaches $\xi=1$ at $\rho = 1- \rho_c^1 \in (1/2,1)$.  
\item Solutions starting at $\xi =0$ with $\rho \in [0,\rho_c^0)$ reach $\xi =1$ with  $\rho \in [0,1-\rho_c^1)$.
 Solutions starting at $\xi =0$ with $\rho \in (1-\rho_c^0,1]$ reach $\xi =1$ with  $\rho \in (\rho_c^1, 1])$.
\item Solutions starting at $\xi =0$ with $\rho \in (\rho_c^0 , 1 -   \rho_c^0 )$ do not cross the line $\xi = \xi^\ast$, in 
particular they do not reach the line $\xi =1$.  Solutions reaching  $\xi =1$ with $\rho \in (1- \rho_c^1 ,  \rho_c^0 )$ do not cross 
the line $\xi = \xi^\ast$ in backwards time, in particular they do not reach the line $\xi =0$.  
\item 
An isolated zero of $g$ at say $\xi_0 \neq \xi^\ast$
corresponds to another folded singularity at $(\xi_0,1/2)$,
which is a folded saddle for $g'(\xi_0) >0$ and a folded center for $g'(\xi_0) <0$. A more degenerate zero of $g$ corresponds
to a more degenerate folded singularity.
If $g$ is zero on an interval $[\xi_1, \xi_2]$, the density $\rho$ is constant there. In this situation $[\xi_1, \xi_2]\times \{1/2\} $ is a line of equilibria, the endpoints
of this line are again degenerate folded singularities. 
\end{enumerate}
\end{lemma}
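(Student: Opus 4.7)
The plan is to split the eleven claims into three groups and handle them by different tools: items 1--5 by direct inspection, items 6--7 and 11 by linearisation of the desingularised system at folded singularities, and items 8--10 via a conserved quantity on $\mathcal{C}_0$. The main obstacle I anticipate is the global extension of the canard orbits (item 8), which I plan to sidestep with an explicit algebraic formula rather than by manifold continuation.

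Items 1--5 are immediate. Substituting $\rho\mapsto 1-\rho$ in $(1-2\rho)\dot\rho = -g(\xi)\rho(1-\rho)$ leaves it invariant, giving the symmetry of item 1; item 2 is the identity $\dot\xi=1$; item 3 follows because $\rho(1-\rho)$ vanishes at $\rho\in\{0,1\}$. For items 4--5 it suffices to inspect the sign of $\dot\rho = -g(\xi)\rho(1-\rho)/(1-2\rho)$ on each branch of $\mathcal{C}_0$; the factor $(1-2\rho)^{-1}$ produces the blow-up of $\dot\rho$ as $\rho\to 1/2$, with its sign determining whether the fold line is reached in forward or backward real time whenever $g(\xi)\neq 0$.

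For items 6--7 and 11 I would linearise \eqref{eq:red-desing} at a folded singularity $(\xi_0,1/2)$ with $g(\xi_0)=0$. The Jacobian has zero diagonal and off-diagonal entries $-2$ and $-g'(\xi_0)/4$, giving eigenvalues $\pm\sqrt{g'(\xi_0)/2}$. At $p^*=(\xi^*,1/2)$ the Main Assumption yields $g'(\xi^*)=k''(\xi^*)/k(\xi^*)>0$, so the linearisation is a hyperbolic saddle; by the classification of \cite{Szmolyan_2001} this is precisely a folded saddle, whose one-dimensional stable and unstable manifolds realise the two singular canards, exchanged by the symmetry of item 1 and distinguished by their crossing direction at the fold. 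At an isolated zero $\xi_0\neq\xi^*$ the same formula gives a folded saddle when $g'(\xi_0)>0$ and a folded centre (purely imaginary eigenvalues) when $g'(\xi_0)<0$; higher-order zeros are handled by normal-form arguments, and on an interval where $g\equiv 0$ the desingularised field vanishes along $\rho=1/2$, yielding the line of equilibria described in item 11.

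For items 8--10 the key tool is the conserved quantity $H(\xi,\rho) := k(\xi)\rho(1-\rho)$ of the reduced flow \eqref{eq:k_redpb} on $\mathcal{C}_0$, inherited directly from $\partial_x(kj)=0$ with $j=\rho(1-\rho)$. The singular canards lie on the level set $H\equiv k(\xi^*)/4$, which solves explicitly to the two algebraic branches $\rho_\pm(\xi) = \tfrac12\pm\tfrac12\sqrt{1-k(\xi^*)/k(\xi)}$; these are globally defined on $[0,1]$ precisely because $\xi^*$ is the global minimum of $k$, and their endpoint values at $\xi=0,1$ define $\rho_c^0,\rho_c^1\in(0,1/2)$ and establish item 8. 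For item 9, orbits starting at $\rho(0)\in[0,\rho_c^0)$ satisfy $H(0,\rho(0))<k(\xi^*)/4$, so by conservation $\rho(1-\rho)<k(\xi^*)/(4k(\xi))\leq 1/4$ throughout $[0,1]$; the orbit stays on the lower branch (by item 3) and its endpoint is read off from the conservation identity at $\xi=1$, with the upper case following by item 1. For item 10, orbits with $\rho(0)\in(\rho_c^0,1-\rho_c^0)$ have $H(0,\rho(0))>k(\xi^*)/4$, so their level set only exists where $k(\xi)\geq 4H$, an interval strictly before $\xi^*$; by item 5 the orbit reaches the fold line $\rho=1/2$ in finite time there. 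This conservation law bypasses the delicate ODE continuation of the folded-saddle manifolds to the boundary, reducing global existence of the canards to the inequality $k(\xi)\geq k(\xi^*)$ built into the Main Assumption.
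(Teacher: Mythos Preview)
Your proposal is correct and follows essentially the same route as the paper's own proof: items 1--5 by direct inspection of \eqref{eq:k_redpb_xrho}, items 6--7 (and 11) by linearising the desingularised system \eqref{eq:red-desing} at $(\xi_0,1/2)$ to obtain the Jacobian with off-diagonal entries $-2$ and $-g'(\xi_0)/4$, and items 8--10 via the conserved quantity $H(\xi,\rho)=k(\xi)\rho(1-\rho)$ on level set $k(\xi^\ast)/4$. The global extension of the canards that you flag as the main obstacle is handled in the paper exactly as you propose, by the explicit algebraic formula together with the global-minimum property of $k$; there is no separate manifold-continuation argument to sidestep.
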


The properties of the reduced problem described in the Lemma
are illustrated in Figure \ref{fig:bottlenecks}   
for a function $k$ which satisfies $k' < 0$ in $(1,\xi^\ast)$ and $k' > 0$ in $(\xi^\ast,1)$.

\begin{remark} ${ }$\\[-5mm]
\begin{enumerate}
    \item[(a)] The notation $S_c$ and $\tilde{S}_c$ for the canard orbits is chosen to be consistent with the notation we introduce
below for other orbits of the reduced problem in the construction of singular solutions of the boundary value problem.
\item[(b)] The property 11. associated with additional zeros of $g$ (which may occur under our rather general main 
Assumption on the function $k$) are mainly included for completeness. 
In Remark~\ref{rem:regN} below, we show that they play no role in the construction of solutions of the boundary value problem, due to property 10. of the Lemma.
\end{enumerate}
\end{remark}

\begin{proof}
Properties 1.-5. follow directly from the equations. The point $p^\ast$ is an  equilibrium for the desingularised system \eqref{eq:red-desing}.
The matrix associated with the linearisation of \eqref{eq:red-desing} at $p^\ast$ is
\begin{equation}\label{lin-label}
A := \left ( \begin{array}{cc}
0 & -2 \\[1mm]
-\frac{g'(\xi^\ast)}{4} & 0
\end{array}
\right ).
\end{equation}
Since $g'(\xi^{\ast})  = k''(\xi^{\ast})  /  k(\xi^{\ast})$, the assumption $k''(\xi^{\ast})>0$ translates into  $g'(\xi^{\ast})>0$. 
This gives
$\det A = -\frac{g(\xi^{\ast})}{2}  < 0$, hence $p^\ast$ is a saddle point for \eqref{eq:red-desing} with associated smooth stable and unstable
manifolds. 
For the reduced problem  \eqref{eq:k_redpb_xrho} -- with the flow direction reversed for $\rho > 1/2$ --
the point $p^\ast$ is a folded saddle  \cite{Szmolyan_2001}.
Due to a cancellation of a simple zero on both sides of the $\rho$-equation in \eqref{eq:k_redpb_xrho}, 
the stable manifold of the saddle is now the (faux) canard $\tilde{S}_c$, corresponding to a smooth solution passing through through
the point $p^\ast$. Similarly, the  unstable manifold of the saddle becomes the canard $S_c$. This proves properties 6. and 7.\\
The conserved quantity $k(\xi) j$ of equation \eqref{eq:kj_const} translates into the conserved quantity
\begin{equation}\label{Eq:H}
 H(\xi, \rho) = k(\xi) \rho (1 - \rho) >0 
 \end{equation}
of the desingularised system \eqref{eq:red-desing}, i.e. the level lines of $H$ give the phase portrait.
The canard orbits $S_c$ and $\tilde{S}_c$ are the level lines $H(\xi, \rho) = \frac{k(\xi^{\ast})}{4} $. Since $k$ has its global minimum
at $\xi^{\ast}$, the canard orbits cannot  intersect the (fold) line $\rho =1/2$. Since in addition, the canard orbits cannot 
intersect the lines $\rho =0$, $\rho=1$ where $H=0$, the canard orbits extend to $\xi =0$ and $\xi =1$. Thus assertion 8.
follows, with $\rho_c^0 \in (0,1/2)$ and $\rho_c^1 \in (1/2,1)$  defined as the solutions of the equations
\[ H(0, \rho_c^0) = \frac{k(\xi^{\ast})}{4} , \qquad  H(1, \rho_c^1) = \frac{k(\xi^{\ast})}{4} . \]
The solutions described in Assertion 9. lie on level lines with  $H(\xi,\rho) >  H(\xi^{\ast}, 1/2)$,
the solutions described in Assertion 10. lie on level lines with  $H(\xi,\rho) < H(\xi^{\ast}, 1/2)$. Together with 8.~this
implies 9. and 10.
\end{proof}

The canard $S_c$ and the (faux) canard $\tilde{S}_c$ on $\mathcal{C}_0$ can be described as graphs  by means of the following functions
\begin{subequations} \label{eq:canorb}
\begin{align}
 \rho_c^+(\xi) &:= \frac12 \left(1+\sqrt{1-\frac{k\left(\xi^\ast\right)}{k(\xi)}} \right),\\
 \rho_c^-(\xi) &:=\frac12 \left( 1-\sqrt{1-\frac{k(\xi^\ast)}{k(\xi)}} \right),
\end{align}
\end{subequations}
as follows
\begin{equation} \label{eq:Scflow}
\begin{aligned}
    S_c &:= \left\{ (\xi, \rho) \, : \, 0 \leq \xi \leq \xi^\ast, \, \rho = \rho_c^+(\xi) \right\} \cup \left\{ (\xi, \rho) \, : \, \xi^\ast \leq \xi \leq 1, \, \rho = \rho_c^-(\xi) \right\}, \\
    \tilde{S}_c &:= \left\{ (\xi, \rho) \, : \, 0 \leq \xi \leq \xi^\ast, \, \rho = \rho_c^-(\xi) \right\} \cup \left\{ (\xi, \rho) \, : \, \xi^\ast \leq \xi \leq 1, \, \rho = \rho_c^+(\xi) \right\}.
\end{aligned}
\end{equation}

The values $\rho_c^0$ and $\rho_c^1$ introduced in Lemma \ref{lem:reducedflow} (corresponding to the $\rho$-values of the (faux) canard) at $\xi =0$ and $\xi=1$, respectively, 
are then given by
\begin{equation} \label{eq:sp_rhof}
 \rho_c^0 = \rho_c^-(0), \qquad
 \rho_c^1 = \rho_c^+(1).
\end{equation}

The points of the canard $S_c$ corresponding to $\xi=0$ and $\xi=1$ in $(j, \xi, \rho)$-space  which play an important role in the following analysis are 
\begin{equation}
\begin{aligned}
 p_c^0 &:= \left( \rho_c^0(1-\rho_c^0), 0, 1-\rho_c^0 \right),\\
 p_c^1 &:= \left( \rho_c^1(1-\rho_c^1), 1, 1-\rho_c^1 \right).
\end{aligned}
\end{equation}

\begin{remark} \label{rem:regN}
The function $g$ may have zeros $\xi \neq \xi^\ast$.
All these points $(\xi, 1/2)$  are equilibria of the desingularised system \eqref{eq:red-desing} but these 
equilibria and possible canard solutions associated with them are confined to the open region $\mathcal{N}$ bounded by $\tilde{S}_c$  from
below and by $S_c$   from above for $\xi < \xi^{\ast}$, and by  $S_c$ from
below and by $\tilde{S}_c$ from above for $\xi > \xi^{\ast}$ (see Fig.~\ref{fig:bottlenecks}).
Since no transitions from $\xi =0$ to $\xi =1$ are possible through the region $\mathcal{N}$, 
it plays no role in the
construction of solutions of the boundary value problem. Since other folded singularities associated with local minima or maxima of $k$
and their associated canard solutions are confined to $\mathcal{N}$ these also play no role for boundary value problem.
\end{remark}

\begin{figure}[!ht]
    \centering
    \begin{overpic}[scale=0.7]{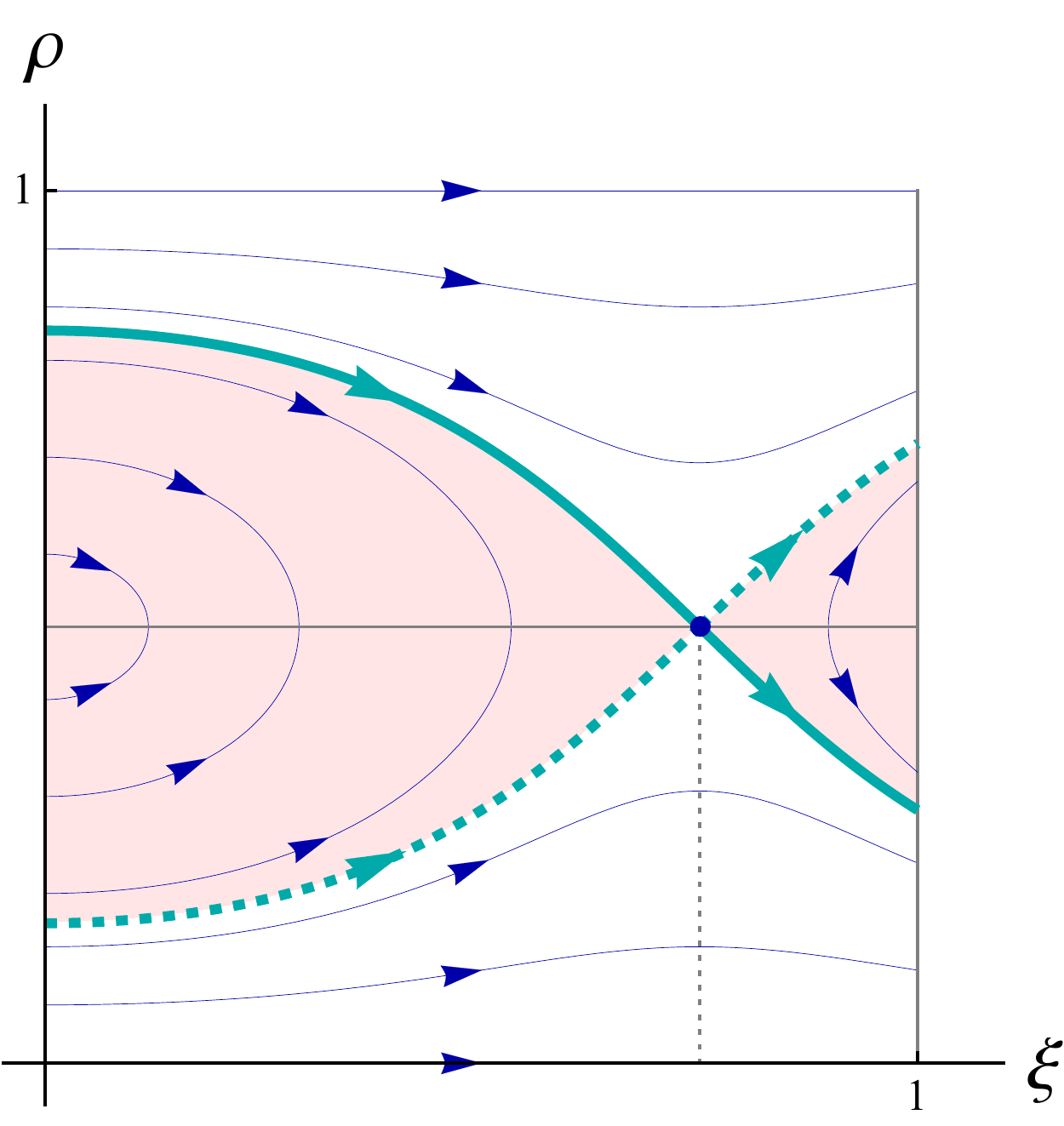}
  	\put(1,2){\small{$0$}}
    \put(61,2){$\xi^\ast$}
    \put(-2,17){$\rho_c^0$}
    \put(-8,70){$1-\rho_c^0$}
    \put(83,27){$1-\rho_c^1$}
    \put(83,61){$\rho_c^1$}
    \put(55,53){$S_c$}
    \put(55,33){$\tilde{S}_c$}
    \end{overpic}
    \caption{
    Illustration of the reduced flow associated to Equations \eqref{eq:sys}-\eqref{eq:bc} described in Lemma \ref{lem:reducedflow} for $k(\xi)=1+a\,\mathrm{cos}\left(\frac{2 \pi \xi}{b} \right)$ and $a=0.3$, $b=1.5$. The solid gray line indicates the line of fold points $F$ (see \eqref{eq:foldL}), whereas the blue dot corresponds to the canard point of folded saddle type $p^\ast$. The cyan curves correspond to the canard $S_c$ (solid line) and the (faux) canard $\tilde{S}_c$ (dashed line). The shaded red area represents the region $\mathcal{N}$ defined in Remark 3, which plays no role in the construction of solutions of the boundary value problem \eqref{eq:sys}-\eqref{eq:bc}.}
    \label{fig:bottlenecks}
\end{figure}

We now begin the construction of solutions of the boundary value problem \eqref{eq:sys}-\eqref{eq:bc} by combining
solutions of the reduced problem with solutions of the layer problem in such a way that the boundary conditions are satisfied.
Here it is important to keep in mind that solutions can jump from points on the repelling branch $\mathcal{C}_0^r$ of the critical
manifold to the attracting branch $\mathcal{C}_0^a$, but not vice versa.
In \cite{Iuorio_2022} we have constructed singular solutions  in the case of a closing channel using a shooting strategy: we evolved the manifold of boundary conditions at $\xi=0$ forward and checked whether it intersected the manifold of boundary conditions at $\xi=1$. This constructive procedure allowed to  identify the initial and final values of $\rho$ (namely $\rho_0$ and $\rho_1$) for $\varepsilon=0$. In the bottleneck scenario, however, the presence of a canard point lying in the interior of the spatial domain $[0,1]$ implies that 
singular orbits containing segments of the canards $S_c$ or $\tilde{S}_c$ can make slow transitions between the branches of the critical manifold. Most importantly, this allows transitions from the attracting branch back to the repelling branch. We will show
that this leads to the new type of transitional profiles, described in the introduction.
 Due to the special role of the canard point $p^{\ast}$ we modify the shooting strategy by evolving also the manifold of boundary conditions at $\xi=1$ (backwards) and checking the intersection with the forward evolution of the manifold of left boundary conditions at $\xi=\xi^\ast$, where the canard point $p^\ast$ lies.

In the dynamical systems framework, boundary conditions \eqref{eq:bc} correspond to two lines in the $(j,\xi,\rho)$-space, satisfying $j=\alpha(1-\rho)$ at $\xi=0$ and $j=\beta \rho$ at $\xi=1$, respectively. However, due to the fast-slow structure, the set of admissible boundary conditions is restricted to  (see Figures \ref{fig:L_comb}-\ref{fig:R_comb})
\begin{subequations} \label{eq:LR}
\begin{align}
 \mathcal{L} &:= \left\{ \left( \alpha(1-s), \, 0, \, s \right) \ : \ \rho_\alpha \leq s \leq 1 \right\}, \\
 \mathcal{R} &:= \left\{ \left( \beta t, \, 1, \, t \right) \ : \ 0 \leq t \leq \rho_\beta \right\}.
\end{align}
\end{subequations}
Here
\begin{equation} \label{eq:rho_alpha}
 \rho_\alpha= \begin{cases} \alpha &\quad \text{if } \alpha \leq \frac12, \\ 1-\frac{1}{4 \alpha} &\quad \text{if } \alpha \geq \frac12, \end{cases}
\end{equation}
and
\begin{equation} \label{eq:rho_beta}
 \rho_\beta= \begin{cases} 1-\beta &\quad \text{if } \beta \leq \frac12, \\ \frac{1}{4\beta} &\quad \text{if } \beta \geq \frac12. \end{cases}
\end{equation}
The lower and upper bounds $\rho_\alpha$ and $\rho_\beta$ for the density $\rho$ are caused by the fast-slow structure of the flow: if we would consider a starting point $(\alpha(1-\rho), \, 0, \, \rho)$ with $0 \leq \rho < \rho_\alpha$, the orbit would be immediately repelled to infinity from $\mathcal{C}_0$, hence connecting to the boundary conditions at $\xi=1$ is impossible. Analogously, points satisfying $(\beta \rho, \, 1, \, \rho)$ with $\rho_\beta < \rho \leq 1$ cannot be endpoints of the singular orbits, since they are repelling for the layer problem.\\
Thus, the initial and final points of the singular orbits -- $p_0$ and $p_1$, respectively -- must satisfy
\begin{equation} \label{eq:p_0f}
 p_0 \in \mathcal{L}, \textrm{ and } p_1 \in \mathcal{R}.
\end{equation}
The manifold $\mathcal{L}$ intersects with $\mathcal{C}_0$ at $(0,0,1)$ and
 \begin{equation} \label{eq:p_l}
 l=(\alpha(1-\alpha),0,\alpha),
 \end{equation}
while $\mathcal{R}$ intersects with $\mathcal{C}_0$ at $(0,1,0)$ and
 \begin{equation} \label{eq:p_r}
 r=(\beta(1-\beta),1,1-\beta).
 \end{equation}
For $\varepsilon=0$, the variable $\xi$ evolves only on $\mathcal{C}_0$ according to the reduced flow \eqref{eq:k_redpb_xrho}. Therefore, in order for the singular solution to evolve from $\xi=0$ to $\xi=1$, we must connect $\mathcal{L}$ and $\mathcal{R}$ to $\mathcal{C}_0$. The points $l$ and $r$ already belong to $\mathcal{C}_0$. Other points on $\mathcal{L}$ and $\mathcal{R}$ can reach $\mathcal{C}_0$ using the layer problem \eqref{eq:laypb_k}. Tracking the evolution of $\mathcal{L}$ by means of the layer problem at $\xi=0$ until it reaches $\mathcal{C}_0$, and analogously the evolution of $\mathcal{R}$ backwards until the layer problem at $\xi=1$ intersects $\mathcal{C}_0$, yields two sets (shown in Figures \ref{fig:L_comb}-\ref{fig:R_comb}):
\begin{subequations} \label{eq:M010_i}
\begin{align}
 \mathcal{L}^+ &:= \left\{ \left( \alpha(1-s), \, 0, \, \rho^+(0,s) \right) \ : \ \rho_\alpha \leq s \leq 1 \right\},
 \label{eq:L+} \\
 \mathcal{R}^- &:= \left\{ \left( \beta t, \, 1, \,  \rho^-(1,t)\right) \ : \ 0 \leq t \leq \rho_\beta \right\}. 
 \label{eq:R-}
\end{align}
\end{subequations}
In the following, we use the symbol $\rho^+(0,s)$ to indicate the $\rho$-value (greater than or equal to $\frac12$) reached by the point $(\alpha(1-s), \, 0, \, s)$ after its transition from $\mathcal{L}$ to $\mathcal{C}_0$ by means of the layer problem. If the solution $(\xi,\rho)$ of the reduced flow \ref{eq:k_redpb_xrho} starting at $(0,\rho^+(0,s))$ reaches $\xi=\xi^\ast$, we denote its value of $\rho$ at $\xi=\xi^\ast$ by $\rho^+\left(\xi^\ast,s\right)$. In an analogous manner, we introduce the symbol $\rho^-(1,t)$ to indicate the $\rho$-value (less than or equal to $\frac12$) reached by the point $(\beta t, \, 1, \, t)$ after its transition from $\mathcal{R}$ to $\mathcal{C}_0$ by means of the layer problem. If the solution $(\xi,\rho)$ of the reduced flow starting at $(1,\rho^-(1,t))$ and flowing backwards reaches $\xi=\xi^\ast$, we denote its value of $\rho$ at $\xi=\xi^\ast$ by $\rho^-\left(\xi^\ast,t\right)$. \\
When $\alpha < \frac12$, the reduced flow can either start on $\mathcal{L}^+$ or at $l$, while for $\alpha > \frac12$ it must start on $\mathcal{L}^+$. Analogously, when $\beta<\frac12$, the reduced flow can either end on $\mathcal{R}^-$ or at $r$, while for $\beta > \frac12$ it must end on $\mathcal{R}^-$.
 \begin{figure}[H] 
 \centering
 \begin{minipage}{.4\textwidth}
  \centering
  \def\svgwidth{.8\textwidth}
  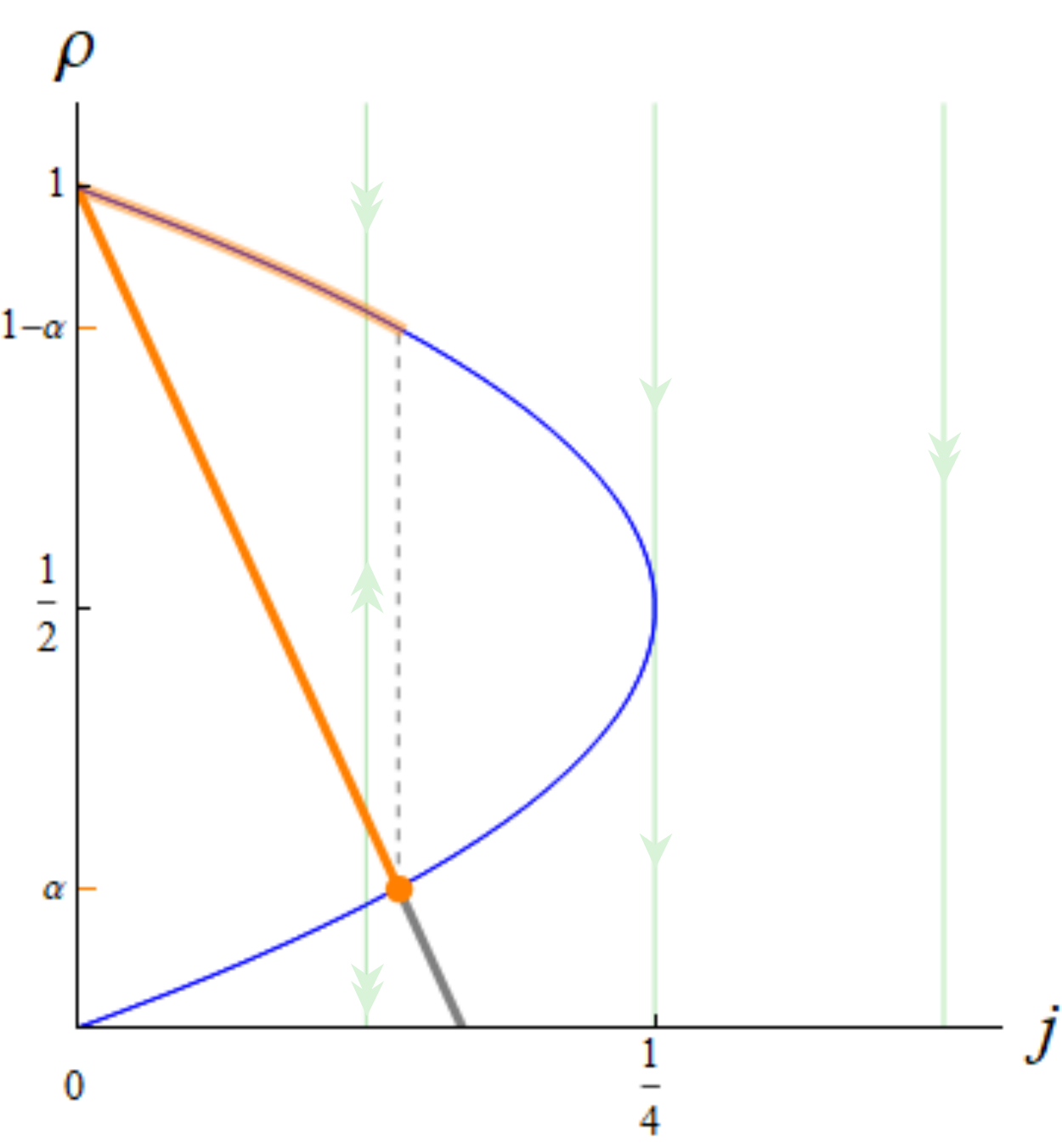\\
  (a)
 \end{minipage}
 \begin{minipage}{.4\textwidth}
  \centering
  \def\svgwidth{.8\textwidth}
  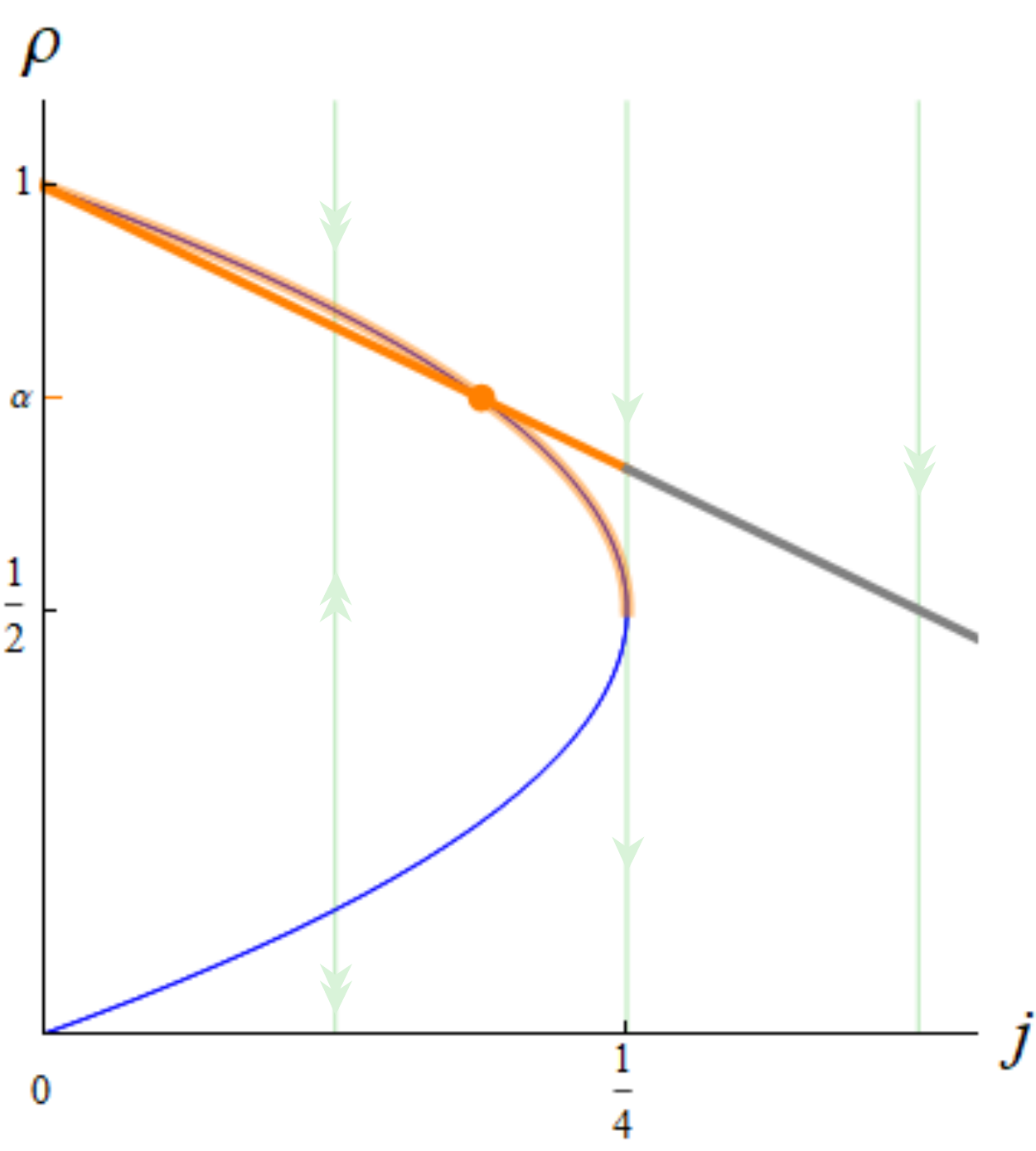\\
  (b)
 \end{minipage}
 \caption{Schematic representation of $\mathcal{L}$ (orange line) and $\mathcal{L}^+$ (orange curve) for (a) $0 <\alpha < \frac12$ and (b) $\frac12 <\alpha < 1$. The orange dot corresponds to $l$, the blue curve represents $\mathcal{C}_0$, and the green lines correspond to the orbits of the layer problem.}
 \label{fig:L_comb}
 \end{figure}
 
 \begin{figure}[H] 
 \centering
 \begin{minipage}{.4\textwidth}
  \centering
  \def\svgwidth{.8\textwidth}
  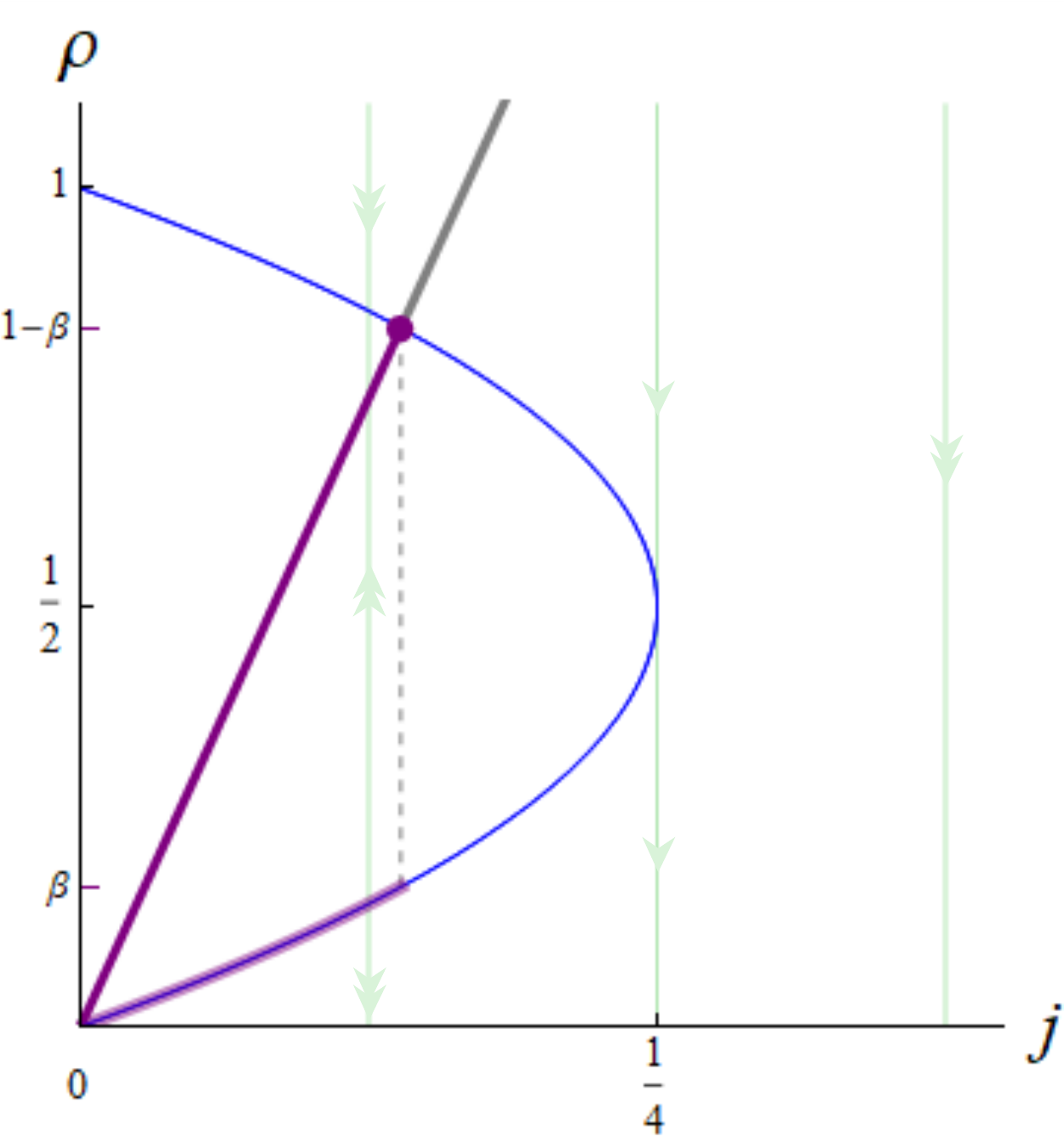\\
  (a)
 \end{minipage}
 \begin{minipage}{.4\textwidth}
  \centering
  \def\svgwidth{.8\textwidth}
  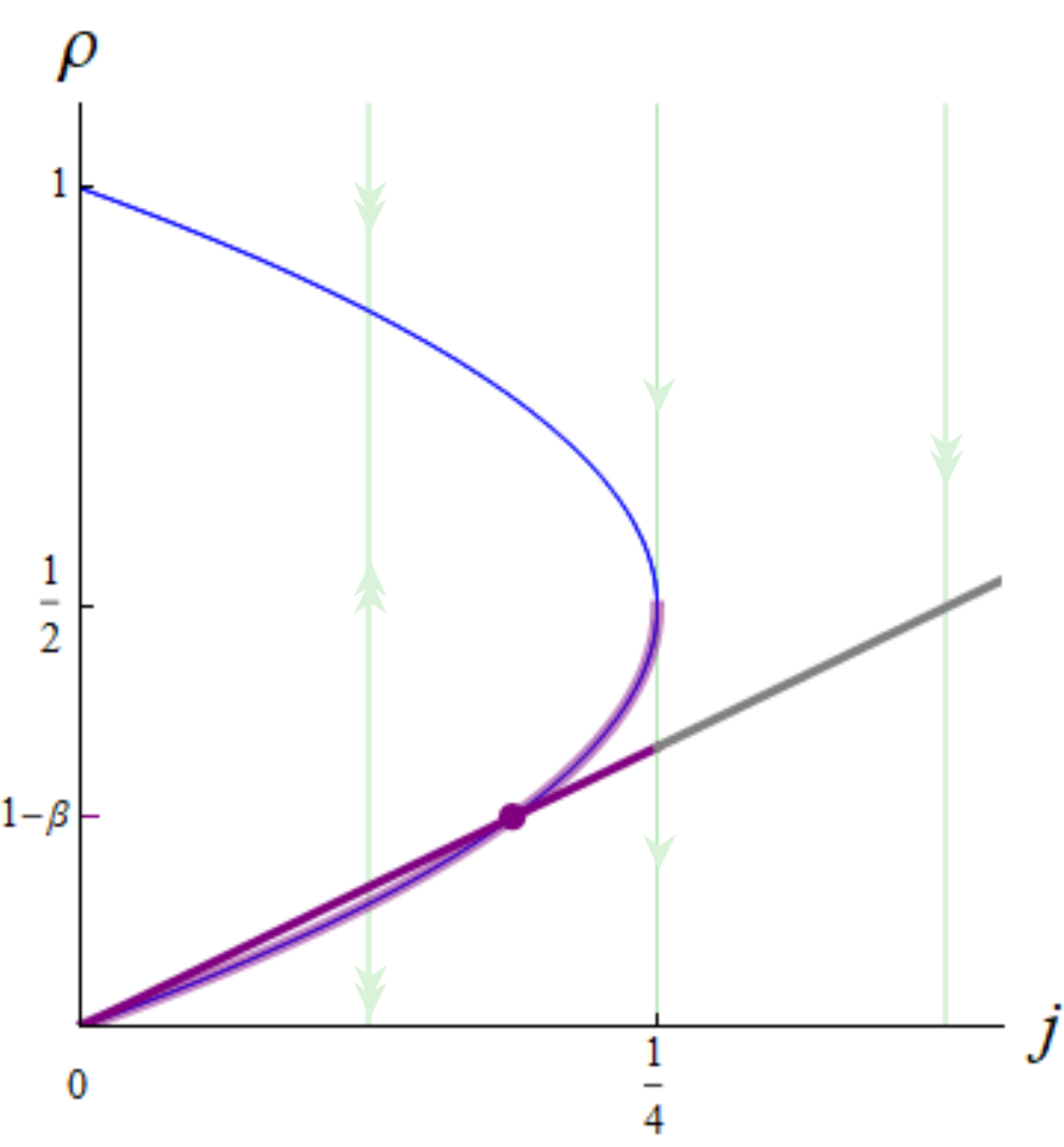\\
  (b)
 \end{minipage}
 \caption{Schematic representation of $\mathcal{R}$ (purple line) and $\mathcal{R}^-$ (purple curve) for (a) $0 < \beta < \frac12$ and (b) $\frac12 <\beta < 1$. The purple dot corresponds to $r$, the blue curve represents $\mathcal{C}_0$, and the green lines correspond to the orbits of the layer problem.}
 \label{fig:R_comb}
 \end{figure}

Based on this geometric interpretation of the boundary conditions, we proceed with the construction of the singular orbits by connecting $\mathcal{L}^+ \cup l$ and  $\mathcal{R}^- \cup r$ by means of the reduced flow \eqref{eq:k_redpb_xrho} on $\mathcal{C}_0$. In doing so, we first let $\mathcal{L}^+$ in \eqref{eq:L+} flow forward and $\mathcal{R}^-$ in \eqref{eq:R-} flow backwards by means of the reduced flow until $\xi=\xi^\ast$: we call the corresponding sets $\mathcal{L}_{\xi^\ast}^+$ and $\mathcal{R}_{\xi^\ast}^-$, respectively.
\begin{subequations} \label{eq:LR_12}
\begin{align}
 \mathcal{L}_{\xi^\ast}^+ &:= \left\{ \left( \rho^+ \left(\xi^\ast,s\right)\left(1-\rho^+\left(\xi^\ast,s\right)\right),\, \xi^\ast,\, \rho^+\left(\xi^\ast,s\right)  \right) \ : \ \rho_\alpha \leq s \leq 1 \right\},
 \label{eq:L+_rast} \\
 \mathcal{R}_{\xi^\ast}^- &:= \left\{ \left( \rho^- \left(\xi^\ast,t\right)\left(1-\rho^-\left(\xi^\ast,t\right)\right),\, \xi^\ast,\, \rho^-\left(\xi^\ast,t\right)  \right) \ : \ 0 \leq t \leq \rho_\beta \right\}.
 \label{eq:R-_rast}
\end{align}
\end{subequations}
If $\alpha \geq \frac12$ then $l \in \mathcal{L}^+$, and the evolution of $l$ by means of the reduced flow is already included in $\mathcal{L}_{\xi^\ast}^+$. If $\alpha < \frac12$ then $l \notin \mathcal{L}^+$, and therefore the corresponding point at $\xi=\xi^\ast$ must be defined separately as
\begin{equation} \label{eq:l_12}
 l_{\xi^\ast} := \left\{ \left( \rho^+\left(\xi^\ast,\alpha\right)\left(1-\rho^+\left(\xi^\ast,\alpha\right)\right), \, \xi^\ast,\, 1-\rho^+\left(\xi^\ast,\alpha\right) \right) \right\}.
\end{equation}
Analogously, if $\beta \geq \frac12$ then $r \in \mathcal{R}^-$, and the backwards evolution of $r$ by means of the reduced flow is already included in $\mathcal{R}_{\xi^\ast}^-$. If $\beta < \frac12$, however, $r \notin \mathcal{R}^-$, and therefore the corresponding point at $\xi=\frac12$ must be defined separately as
\begin{equation} \label{eq:r_12}
 r_{\xi^\ast} := \left\{ \left(\rho^-\left(\xi^\ast,1-\beta\right)\left(1-\rho^-\left(\xi^\ast,1-\beta\right)\right), \, \xi^\ast,\, 1-\rho^-\left(\xi^\ast,1-\beta\right)\right) \right\}.
\end{equation}
We note that the point $l_{\xi^\ast}$ exists if and only if $\alpha \leq \rho_c^0$; analogously, the point $r_{\xi^\ast}$ exists if and only if $\beta \leq 1-\rho_c^1$ (see Remark \ref{rem:regN}). \\
A singular orbit is then given by matching the slow and fast pieces obtained by investigating the reduced and layer problems, respectively. More specifically, a singular orbit exists if and only if the intersection between the sets $\mathcal{L}_{\xi^\ast}^+ \cup l_{\xi^\ast}$ and $\mathcal{R}_{\xi^\ast}^- \cup r_{\xi^\ast}$ is non-empty, and it is unique if this intersection consists of one point.\\
In addition to the canards $S_c$ and $\tilde{S}_c$ introduced above, our analysis of the existence and structure of singular orbits is based on four special orbits $S_{\alpha}$, $S_{\beta}$, $\tilde{S}_{\alpha}$, $\tilde{S}_{\beta}$ of the reduced flow (see Figure \ref{fig:sf_sp}):
\begin{itemize}
 \item The orbit $S_{\alpha}$, defined for each $\alpha \in (0,1)$, is the one starting at $\rho=\alpha$ at $\xi=0$. For $\alpha < \rho_c^0$ or $\alpha >  1-\rho_c^0$, the corresponding final value of $\rho$ at $\xi=1$ is denoted by $\rho^\ast(\alpha)$. For $\rho_c^0 < \alpha < 1-\rho_c^0$, $S_{\alpha}$ ends on the fold line $F$ and hence $S_{\alpha} \subset \mathcal{N}$. For $\alpha=\rho_c^0$ or $\alpha=1-\rho_c^0$, $S_{\alpha}$ ends on the canard point $p^\ast$ at $\xi = \xi^\ast$, and its continuation for $\xi \in [\xi^\ast,1]$ is therefore not uniquely defined.
 \item The orbit $S_{\beta}$, defined for each $\beta \in (0,1)$, is the one ending at $\rho=1-\beta$ at $\xi=1$. For $\beta < 1-\rho_c^1$ or $\beta > \rho_c^1$, the corresponding initial value of $\rho$ at $\xi=0$ is denoted by $\rho_\ast(\beta)$. For $1-\rho_c^1 < \beta < \rho_c^1$, $S_{\beta}$ ends on the fold line $F$ and hence $S_{\beta} \subset \mathcal{N}$. For $\beta=1-\rho_c^1$ or $\beta=\rho_c^1$, $S_{\beta}$ ends on the canard point $p^\ast$ at $\xi = \xi^\ast$ backward in $\xi$, and its continuation for $\xi \in [0, \xi^\ast]$ is therefore not uniquely defined.
 \item For $i=\alpha,\beta$, we define $\tilde{S}_i$ as the reflection of the orbit $S_i$ with respect to $\rho=\frac12$.
\end{itemize}

Depending on the values of $\alpha$ and $\beta$, one of the orbits $S_i$, $\tilde{S}_i$, $i=c, \alpha, \beta$, corresponds to the slow part of the singular orbits we will construct.\\
Changing $\alpha$ and $\beta$ influences the orbits $S_{\alpha}$, $\tilde{S}_{\alpha}$ and $S_{\beta}$, $\tilde{S}_{\beta}$. We will show in the following that the $\alpha$, $\beta$ dependent mutual position of these orbits determines the type of singular solution of the boundary value problem.

\begin{figure}[!ht]
  \centering
  	\begin{overpic}[scale=0.7]{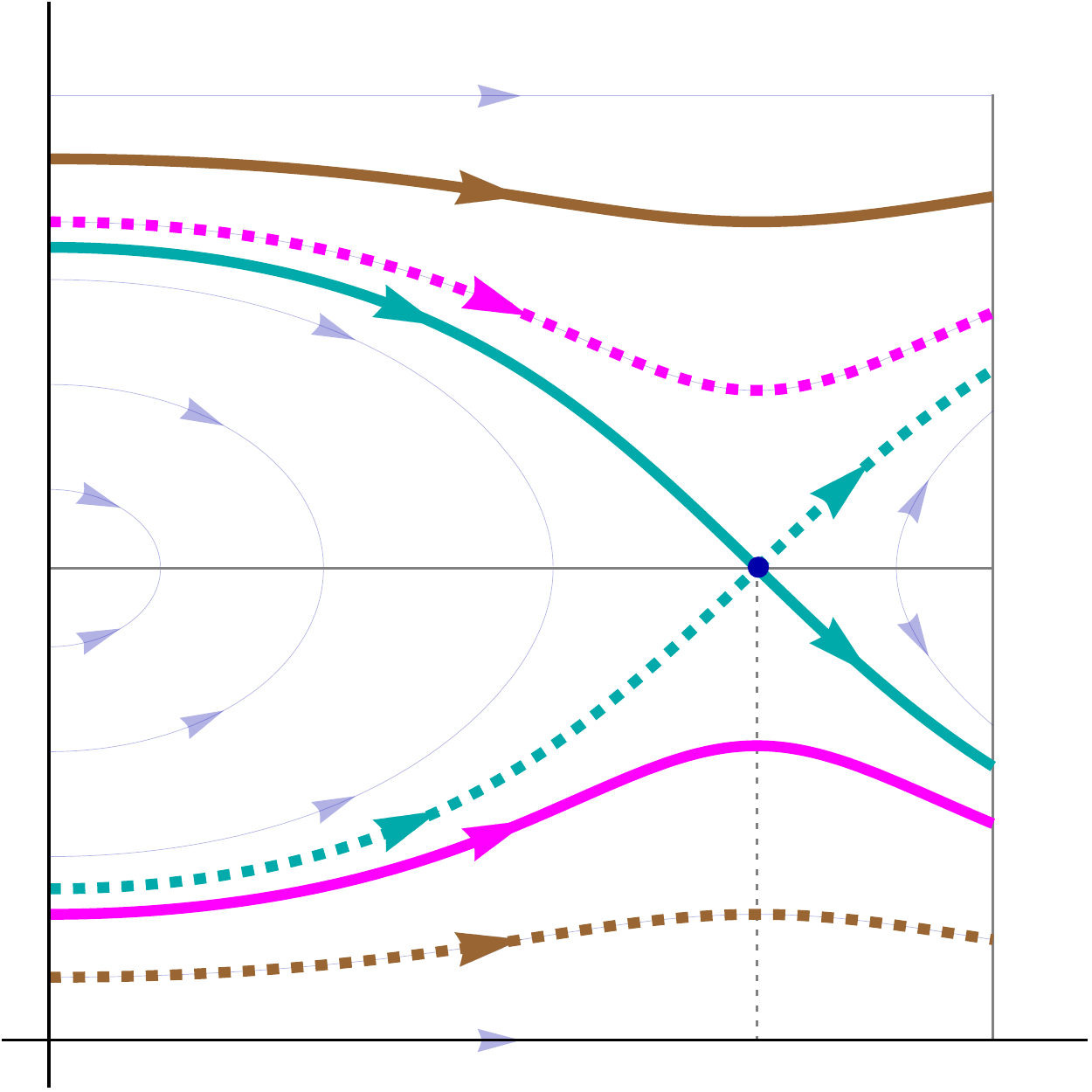}
  	\put(0,0){$0$}
  	\put(68,0){$\xi^\ast$}
  	\put(90,0){$1$}
  	\put(100,0){\Large$\xi$}
	\put(-11,8){\footnotesize $1-\rho_\ast(\beta)$}
    \put(0,14){\footnotesize $\alpha$}
    \put(0,19){\footnotesize $\rho_c^0$}
    \put(0,47){$\frac12$}
    \put(-6,76){\footnotesize $1-\rho_c^0$}
    \put(-5,80){\footnotesize $1-\alpha$}
    \put(-5,85){\footnotesize $\rho_\ast(\beta)$}
    \put(0,90){$1$}
    \put(0,100){\Large$\rho$}
    \put(92,10){\footnotesize $\beta$}
    \put(92,22){\footnotesize $\rho^\ast(\alpha)$}
    \put(92,29){\footnotesize $1-\rho_c^1$}
    \put(92,65){\footnotesize $\rho_c^1$}
    \put(92,73){\footnotesize $1-\rho^\ast(\alpha)$}
    \put(92,83){\footnotesize $1-\beta$}
    \put(60,35){$\tilde{S}_c$}
    \put(60,58){$S_c$}
    \put(55,20){$S_{\alpha}$}
    \put(55,73){$\tilde{S}_{\alpha}$}
    \put(72,85){$S_{\beta}$}
    \put(72,8){$\tilde{S}_{\beta}$}
	\end{overpic}
  \caption{Schematic illustration of the special orbits $S_c$, $S_{\alpha}$, $S_{\beta}$, $\tilde{S}_c$, $\tilde{S}_{\alpha}$, $\tilde{S}_{\beta}$ in $(\xi,\rho)$-space in the case $0 < \alpha < \rho_c^0$ and $\rho_c^1 < \beta < 1$. Here $k(\xi)=1+a\,\mathrm{cos}\left(\frac{2 \pi \xi}{b} \right)$ with $a=0.3$, $b=1.5$. The orbit $S_c$ (solid cyan curve) connects $(0,1-\rho_c^0)$ and $(1,1-\rho_c^1)$, the orbit $S_{\alpha}$ (solid magenta curve) connects $(0,\alpha)$ and $(1,\rho^\ast(\alpha))$, and the orbit $S_{\beta}$ (solid brown curve) connects $(0,\rho_\ast(\beta))$ and $(1,1-\beta)$ with $\alpha,\beta < \frac12$ as in \eqref{eq:sp_rho}. The dashed curves represent the orbits $\tilde{S}_c$ (cyan), $\tilde{S}_{\alpha}$ (magenta), $\tilde{S}_{\beta}$ (brown), which are symmetric to the corresponding solid ones $S_c$, $S_{\alpha}$, $S_{\beta}$ with respect to $\rho=\frac12$.  We note that the orbits $S_{\alpha}$, $\tilde{S}_{\alpha}$ and $S_{\beta}$, $\tilde{S}_{\beta}$ switch position in the above diagram as $\alpha$, $\beta > \frac12$.}
    \label{fig:sf_sp}
\end{figure}
By using the conserved quantity \eqref{Eq:H}
the respective values of $\rho^\ast(\alpha)$ and  $\rho_\ast(\beta)$
can be computed explicitly:
\begin{subequations} \label{eq:sp_rho}
\begin{align}
 \rho^\ast(\alpha) &:= 
              \left\{
                \begin{array}{ll}
                  \frac12 \left( 1 - \sqrt{1-4\alpha(1-\alpha) \frac{k(0)}{k(1)}} \right) \text{ if } \alpha < \rho_c^0, \\
                  \frac12 \left( 1 + \sqrt{1-4\alpha(1-\alpha) \frac{k(0)}{k(1)}} \right) \text{ if } \alpha > 1 - \rho_c^0,
                \end{array}
              \right.\\
 \rho_\ast(\beta) &:=
              \left\{
                \begin{array}{ll}
                  \frac12 \left( 1 + \sqrt{1-4\beta(1-\beta) \frac{k(1)}{k(0)}} \right) \text{ if } \beta < 1-\rho_c^1, \\
                  \frac12 \left( 1 - \sqrt{1-4\beta(1-\beta) \frac{k(1)}{k(0)}} \right) \text{ if } \beta > \rho_c^1. \\
                \end{array}
              \right.
\end{align}
\end{subequations}
Note that $\alpha=1-\rho_\ast(\beta)$ is equivalent to $\beta=\rho^\ast(\alpha)$.
 
Based on this, we divide the $(\alpha,\beta)$-parameter space into eight regions $\mathcal{G}_i$, $i=1,\dots,8$ defined via the following curves $\gamma_{ij}$ (here the indices refer to the adjacent regions):
\begin{subequations}\label{eq:g_bound}
 \begin{align}
  \gamma_{12} &:= \left\{ (\alpha,\beta) \ : \;  0< \alpha  \leq \rho_c^0, \, \beta = 1-\rho^\ast(\alpha) \right\},\\
  \gamma_{13} &:= \left\{ (\alpha,\beta) \ : \; \alpha = \rho_c^0, \, 1-\rho_c^1 \leq \beta \leq \rho_c^1 \right\},\\
  \gamma_{17} &:= \left\{ (\alpha,\beta) \ : \; \alpha = 1-\rho_\ast(\beta), \, 0 < \beta \leq 1-\rho_c^1 \right\},\\
  \gamma_{24} &:= \left\{ (\alpha,\beta) \ : \; \alpha = \rho_c^0, \, \rho_c^1 \leq \beta < 1 \right\},\\
  \gamma_{34} &:= \left\{ (\alpha,\beta) \ : \; \rho_c^0 \leq \alpha \leq 1-\rho_c^0, \, \beta = \rho_c^1 \right\},\\
  \gamma_{35} &:= \left\{ (\alpha,\beta) \ : \; \alpha = 1-\rho_c^0, \, 1-\rho_c^1 \leq \beta \leq \rho_c^1 \right\},\\
  \gamma_{37} &:= \left\{ (\alpha,\beta) \ : \; \rho_c^0 \leq \alpha \leq 1-\rho_c^0, \, \beta = 1-\rho_c^1 \right\},\\
  \gamma_{46} &:= \left\{ (\alpha,\beta) \ : \; \alpha = 1-\rho_c^0, \, \rho_c^1 \leq \beta < 1 \right\},\\
  \gamma_{56} &:= \left\{ (\alpha,\beta) \ : \; 1-\rho_c^0 \leq \alpha < 1, \, \beta = \rho_c^1 \right\},\\
  \gamma_{58} &:= \left\{ (\alpha,\beta) \ : \; 1-\rho_c^0 \leq \alpha < 1, \, \beta = 1-\rho_c^1 \right\},\\
  \gamma_{78} &:= \left\{ (\alpha,\beta) \ : \; \alpha = \rho_\ast(\beta), \, 0 < \beta \leq 1-\rho_c^1 \right\}.
  \end{align}
 \end{subequations}
The above curves correspond to situations where some of the orbits $S_i$, $\tilde{S}_i$, $i=1,2,3$ defined above coincide. In particular:
\begin{itemize}
    \item for $(\alpha,\beta) \in \gamma_{12}$, we have $S_{\alpha}=S_{\beta}$ (lying in $\mathcal{C}_0^r$);
    \item for $(\alpha,\beta) \in \gamma_{13} \cup \gamma_{24}$, we have $\tilde{S}_c=S_{\alpha}$ for $\xi \in [0, \xi^\ast]$ (i.e. up to the canard point $p^\ast$);
    \item for $(\alpha,\beta) \in \gamma_{17}$, we have $S_{\alpha}=\tilde{S}_{\beta}$;
    \item for $(\alpha,\beta) \in \gamma_{34} \cup \gamma_{56}$, we have $S_c=S_{\beta}$ for $\xi \in [\xi^\ast, 1]$ (i.e. up to the canard point $p^\ast$);
    \item for $(\alpha,\beta) \in \gamma_{35} \cup \gamma_{46}$, we have $S_c=S_{\alpha}$ for $\xi \in [0, \xi^\ast]$ (i.e. up to the canard point $p^\ast$);
    \item for $(\alpha,\beta) \in \gamma_{37} \cup \gamma_{58}$, we have $\tilde{S}_c=S_{\beta}$ for $\xi \in [\xi^\ast, 1]$ (i.e. up to the canard point $p^\ast$);
    \item for $(\alpha,\beta) \in \gamma_{78}$, we have $S_{\alpha}=S_{\beta}$ (lying in $\mathcal{C}_0^a$).
\end{itemize}
\begin{remark}
Whenever two orbits coincide, their symmetric reflections with respect to $\rho=\frac12$ coincide as well.
\end{remark}

The eleven curves in \eqref{eq:g_bound} split $(0,1)^2$ into $8$ regions $\mathcal{G}_i$, $i=1,\dots,8$ (shown in Figure \ref{fig:bifdiag}):
\begin{subequations} \label{eq:regions}
\begin{align}
    \mathcal{G}_1 &:= \left\{ (\alpha,\beta) \ : \; 0 < \alpha < \rho_c^0, \, \rho^\ast(\alpha) < \beta < 1-\rho^\ast(\alpha) \right\} \\
    \mathcal{G}_2 &:= \left\{ (\alpha,\beta) \ : \; 0 < \alpha < \rho_c^0, \, 1-\rho^\ast(\alpha) < \beta < 1 \right\}, \\
    \mathcal{G}_3 &:= \left\{ (\alpha,\beta) \ : \; \rho_c^0 < \alpha < 1-\rho_c^0, \, 1-\rho_c^1 < \beta < \rho_c^1 \right\}, \\
    \mathcal{G}_4 &:= \left\{ (\alpha,\beta) \ : \; \rho_c^0 < \alpha < 1-\rho_c^0, \, \rho_c^1 < \beta < 1 \right\}, \\
    \mathcal{G}_5 &:= \left\{ (\alpha,\beta) \ : \; 1-\rho_c^0 < \alpha < 1, \, 1-\rho_c^1 < \beta < \rho_c^1 \right\}, \\
    \mathcal{G}_6 &:= \left\{ (\alpha,\beta) \ : \; 1-\rho_c^0 < \alpha < 1, \, \rho_c^1 < \beta < 1 \right\}, \\
    \mathcal{G}_7 &:= \left\{ (\alpha,\beta) \ : \; 1-\rho_\ast(\beta) < \alpha < \rho_\ast(\beta), \, 0 < \beta < 1-\rho_c^1  \right\}, \\
    \mathcal{G}_8 &:= \left\{ (\alpha,\beta) \ : \; \rho_\ast(\beta) < \alpha < 1, \, 0 < \beta < 1-\rho_c^1  \right\}.
\end{align}
\end{subequations} 

In short terms, moving from one region to the other in the $(\alpha,\beta)$ -parameter space leads to a corresponding change in the structure of the singular solutions.

\begin{figure}[!ht]
    \centering
    \includegraphics[scale=1]{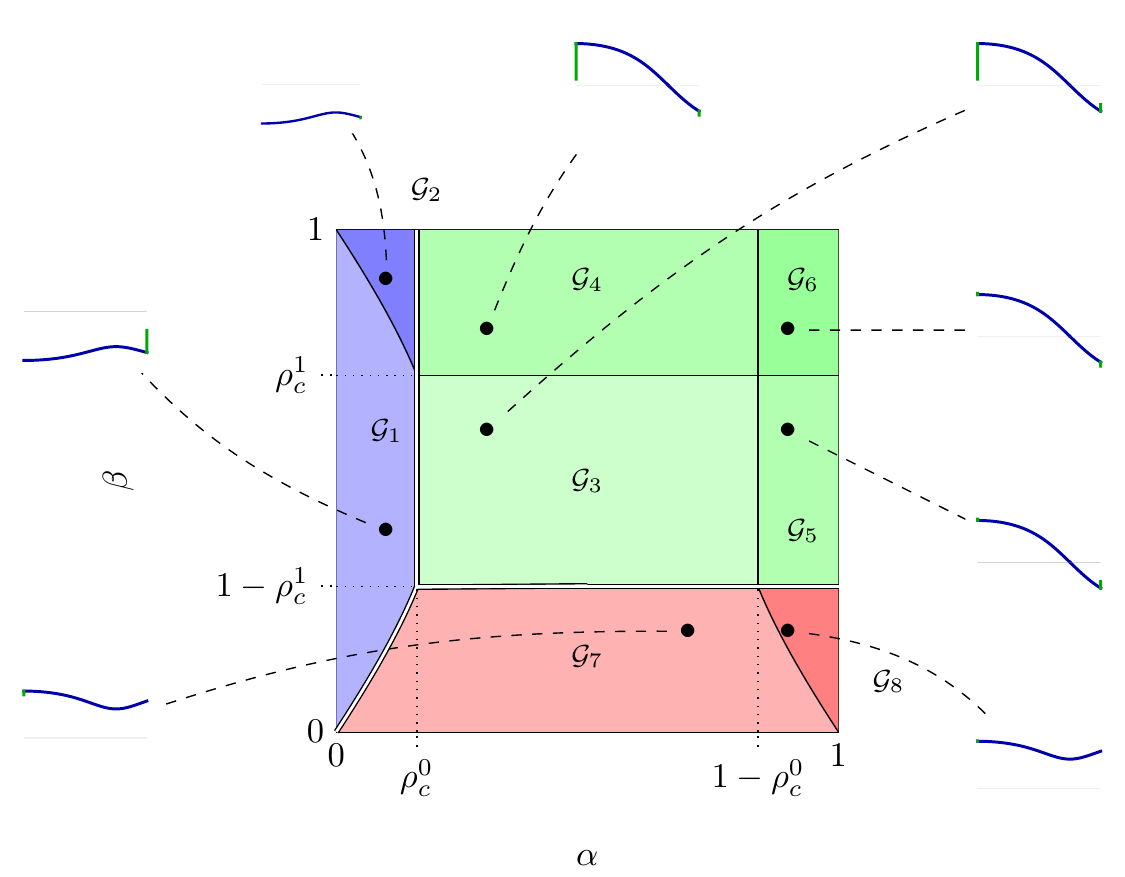}
    \caption{Representation of the $(\alpha,\beta)$ bifurcation diagram for $\varepsilon=0$. Here \mbox{$k(\xi)=1+a\,\mathrm{cos}\left(\frac{2 \pi \xi}{b} \right)$} with $a=0.3$, $b=1.5$. Red regions correspond to high density, blue regions to low density, and green regions to transitions from high to low density regimes. In the insets, the density $\rho$ is shown as a function of $\xi$. The blue parts correspond to solutions of the reduced problem \eqref{eq:k_redpb_xrho}, whereas the green parts indicate boundary layers. The gray line represents $\rho=\frac12$}
    \label{fig:bifdiag}
\end{figure}

We will show (in Proposition \ref{prop:singsol}) that within each of those region the structure of the singular solutions is the same. Note that our construction of singular solutions works also on all the boundary curves defined in \eqref{eq:g_bound} except for $\gamma_{17}$, $\gamma_{13} \cup \gamma_{24}$, and $\gamma_{37} \cup \gamma_{58}$,  where singular solutions are not unique (see Remark \ref{rem:fmam}).\\
\noindent
To this aim, we introduce the following eight types of singular solutions (see Figure \ref{fig:singsol1}-\ref{fig:singsol2}): 
\begin{description}
\item{Type 1.} Singular solutions which start on $\mathcal{C}_0^r$ at $\xi=0$, follow the reduced flow on $\mathcal{C}_0^r$ (where $\rho$ increases), and have a layer at $\xi=1$ in which $\rho$ increases.
\item{Type 2.} Singular solutions which start on $\mathcal{C}_0^r$ at $\xi=0$, follow the reduced flow on $\mathcal{C}_0^r$ (where $\rho$ increases), and have a layer at $\xi=1$ in which $\rho$ decreases.
\item{Type 3.} Singular solutions which have a layer at $\xi=0$ in which $\rho$ increases, follow the reduced flow on $\mathcal{C}_0$ (where $\rho$ decreases) passing through the point $p^\ast$, and have another layer at $\xi=1$ in which $\rho$ increases.
\item{Type 4.} Singular solutions which have a layer at $\xi=0$ in which $\rho$ increases, follow the reduced flow on $\mathcal{C}_0$ (where $\rho$ decreases) passing through the point $p^\ast$, and have another layer at $\xi=1$ in which $\rho$ decreases.
\item{Type 5.} Singular solutions which have a layer at $\xi=0$ in which $\rho$ decreases, follow the reduced flow on $\mathcal{C}_0$ (where $\rho$ decreases) passing through the point $p^\ast$, and have another layer at $\xi=1$ in which $\rho$ increases.
\item{Type 6.} Singular solutions which have a layer at $\xi=0$ in which $\rho$ decreases, follow the reduced flow on $\mathcal{C}_0$ (where $\rho$ decreases) passing through the point $p^\ast$, and have another layer at $\xi=1$ in which $\rho$ decreases.
\item{Type 7.} Singular solutions which have a layer at $\xi=0$ in which $\rho$ increases, and follow the reduced flow on $\mathcal{C}_0^a$ (where $\rho$ decreases).
\item{Type 8.} Singular solutions which have a layer at $\xi=0$ in which $\rho$ decreases and follow the reduced flow on $\mathcal{C}_0^a$ (where $\rho$ decreases).
\end{description}
 
\noindent More details about the construction and structure of these singular orbits are given in the proof of the following proposition.
\begin{proposition} \label{prop:singsol}
 Let $k \in C^2([0,1])$ be a positive function satisfying Assumption \eqref{k-assumption}.
 Then for each $(\alpha,\beta) \in \mathcal{G}_i$, $i=1, \ldots, 8$  there exists a unique singular solution $\Gamma^i$ of type $i$ to \eqref{eq:reduced equation} composed of segments of orbits of the layer problem \eqref{eq:laypb_k} and the reduced problem \eqref{eq:k_redpb_xrho} satisfying the boundary conditions.
\end{proposition}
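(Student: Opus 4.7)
The plan is to verify existence and uniqueness region by region, using the intersection-based characterization set up just before the proposition: a singular orbit corresponds to a point in $(\mathcal{L}^+_{\xi^\ast}\cup\{l_{\xi^\ast}\}) \cap (\mathcal{R}^-_{\xi^\ast}\cup\{r_{\xi^\ast}\})$ at $\xi=\xi^\ast$, and the location of this intersection relative to the fold line $F$ and the canards $S_c, \tilde{S}_c$ determines the Type.

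For $(\alpha,\beta) \in \mathcal{G}_i$ I would first identify the admissible base sets on $\mathcal{C}_0$ at the two endpoints: $l$ (respectively $r$) is available iff $\alpha < 1/2$ (respectively $\beta < 1/2$), corresponding to the absence of a boundary layer; otherwise one must use $\mathcal{L}^+$ or $\mathcal{R}^-$. Property 10 of Lemma~\ref{lem:reducedflow} further excludes base points with $\rho \in (\rho_c^0, 1-\rho_c^0)$ at $\xi=0$ (and symmetrically at $\xi=1$), and it is precisely these combinatorial restrictions together with the canard-threshold conditions $\alpha \lessgtr \rho_c^0$, $\beta \lessgtr \rho_c^1$ that carve out the eight regions \eqref{eq:regions}. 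I would then push the selected base sets to $\xi=\xi^\ast$ by the reduced flow~\eqref{eq:k_redpb_xrho}: by parts (2) and (4) of Lemma~\ref{lem:reducedflow}, each of $\mathcal{L}^+_{\xi^\ast}$ and $\mathcal{R}^-_{\xi^\ast}$ is a continuous monotone arc on $\mathcal{C}_0$ whose endpoint $\rho$-values admit closed-form expressions via the conserved quantity $H$ in~\eqref{Eq:H}, yielding in particular the formulas $\rho^\ast(\alpha)$ and $\rho_\ast(\beta)$ of~\eqref{eq:sp_rho}.

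The matching step then reduces to a one-dimensional intersection problem at $\xi = \xi^\ast$. For each $\mathcal{G}_i$, I would verify that the arcs $\mathcal{L}^+_{\xi^\ast}\cup\{l_{\xi^\ast}\}$ and $\mathcal{R}^-_{\xi^\ast}\cup\{r_{\xi^\ast}\}$ intersect at exactly one point, with monotonicity yielding uniqueness and the defining inequalities of $\mathcal{G}_i$ yielding existence. For the four non-canard regions $\mathcal{G}_1, \mathcal{G}_2, \mathcal{G}_7, \mathcal{G}_8$ this is essentially an algebraic computation: the slow piece remains entirely on $\mathcal{C}_0^r$ or on $\mathcal{C}_0^a$, and the comparison using \eqref{eq:sp_rho} directly produces Types 1, 2, 7, 8 respectively, with the boundary-layer direction fixed by which side of $\mathcal{R}$ (or $\mathcal{L}$) the $j$-coordinate of the slow endpoint lies.

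The main obstacle is the canard-driven regions $\mathcal{G}_3$--$\mathcal{G}_6$, where the slow arc must leave $\mathcal{C}_0^a$ along the canard $S_c$, cross $p^\ast$, and continue on $\mathcal{C}_0^r$. Here I would verify that under the defining inequalities of each region there is a unique $s \in [\rho_\alpha, 1]$ whose forward image along the reduced flow lands on $S_c$ at $\xi = \xi^\ast$, and analogously a unique $t$ on the right, existence being guaranteed by the canard-threshold conditions $\alpha \ge \rho_c^0$ and $\beta \ge 1 - \rho_c^1$ and uniqueness by strict monotonicity of $H$ along the arcs. The four possible combinations of layer directions at the two boundaries correspond exactly to Types 3--6, while the boundary curves $\gamma_{ij}$ in~\eqref{eq:g_bound} encode precisely the coincidences of the special orbits $S_\alpha, \tilde{S}_\alpha, S_\beta, \tilde{S}_\beta, S_c, \tilde{S}_c$ listed immediately after~\eqref{eq:g_bound}; crossing a $\gamma_{ij}$ transfers the intersection point between branches of $\mathcal{C}_0$ and thereby switches the Type in the manner indicated by Figure~\ref{fig:bifdiag}.
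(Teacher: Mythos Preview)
Your proposal is correct and follows essentially the same shooting-to-$\xi^\ast$ strategy as the paper: both arguments reduce the problem to identifying the unique point in $(\mathcal{L}^+_{\xi^\ast}\cup\{l_{\xi^\ast}\})\cap(\mathcal{R}^-_{\xi^\ast}\cup\{r_{\xi^\ast}\})$, split into the non-canard cases (your $\mathcal{G}_1,\mathcal{G}_2,\mathcal{G}_7,\mathcal{G}_8$; the paper's Cases~1 and~3) and the canard cases (your $\mathcal{G}_3$--$\mathcal{G}_6$; the paper's Case~2), and read off the layer directions from the inequalities defining each region.

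One small point where the paper is sharper: in the canard regions you propose to ``verify that there is a unique $s\in[\rho_\alpha,1]$ whose forward image lands on $S_c$ at $\xi=\xi^\ast$'' and appeal to monotonicity of $H$ for uniqueness. The paper instead observes directly that $\mathcal{L}^+_{\xi^\ast}\subset\mathcal{C}_0^a$ and $\mathcal{R}^-_{\xi^\ast}\subset\mathcal{C}_0^r$, so their intersection at $\xi=\xi^\ast$ can \emph{only} be the fold point $p^\ast$; this forces the slow segment to be $S_c$ and gives uniqueness for free, together with the explicit endpoint formulas $\rho_0=1-\tfrac{k(\xi^\ast)}{4\alpha k(0)}$, $\rho_1=\tfrac{k(\xi^\ast)}{4\beta k(1)}$. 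Your argument reaches the same conclusion but takes one extra step.
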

\begin{proof}
 The proof is based on the shooting technique outlined above. Technically speaking, we show that the intersection of the sets $\mathcal{L}_{\xi^\ast}^+ \cup l_{\xi^\ast}$ in \eqref{eq:L+_rast}-\eqref{eq:l_12} and $\mathcal{R}_{\xi^\ast}^- \cup r_{\xi^\ast}$ in \eqref{eq:R-_rast}-\eqref{eq:r_12} is non-empty, and in particular consists of one point. This gives us the unique values of $\rho_0,\,\rho_1$ for which a singular orbit exists depending on $\alpha$ and $\beta$, which in turn allows us to identify the eight types of singular solutions corresponding to the eight regions defined in \eqref{eq:regions}.
While we claim the existence of singular solutions only in the open regions $\Gamma^i$, $i=1,\ldots,8$ we also comment on the singular configurations
where $(\alpha,\beta)$ lies on the curves $\gamma_{ij}$ from (\ref{eq:g_bound}).\\
In principle there are four possible ways for the intersection between $\mathcal{L}_{\xi^\ast}^+ \cup l_{\xi^\ast}$ and $\mathcal{R}_{\xi^\ast}^- \cup r_{\xi^\ast}$ to occur; one of these defines four possible profiles corresponding to four regions in $(\alpha,\beta)$-parameter space, two of these lead to two possible profiles corresponding to two regions in $(\alpha,\beta)$-parameter space, while the fourth case ($l_1 \cap r$) leads to an empty intersection, since $l_{\xi^\ast}$ and $r_{\xi^\ast}$ are separated from $\mathcal{L}_{\xi^\ast}^+$ and $\mathcal{R}_{\xi^\ast}^-$, respectively, only for $\alpha \leq \rho_c^0$ and $\beta \leq 1-\rho_c^1$, and in this case they can never coincide.
Thus, we are left with:
\begin{description}
 \item[Case 1: $l_{\xi^\ast} \cap \mathcal{R}_{\xi^\ast}^- \neq \emptyset$.] From the investigation of this case we obtain orbits of type 1, 2.
 \item[Case 2: $\mathcal{L}_{\xi^\ast}^+ \cap \mathcal{R}_{\xi^\ast}^- \neq \emptyset$.] From the investigation of this case we obtain orbits of type 3, 4, 5, 6.
 \item[Case 3: $\mathcal{L}_{\xi^\ast}^+ \cap r_{\xi^\ast} \neq \emptyset$.] From the investigation of this case we obtain orbits of type 7, 8.
\end{description}
In the following, we examine Cases 1-3 in more detail.\\

\emph{Case 1: $l_{\xi^\ast}\,\cap\,\mathcal{R}_{\xi^\ast}^- \neq \emptyset$.} By definition of $l_{\xi^\ast}$, this occurs only when $\alpha \leq \rho_c^0$. In this case, we have $l_{\xi^\ast} \in \mathcal{R}_{\xi^\ast}^-$, which implies that $p_0 = l$ and, consequently, $\rho_0=\alpha$. This implies that in this regime no boundary layers exist at $\xi=0$. Moreover, since $\rho(1,s)=\rho^\ast(\alpha)$, following the flow of the layer problem until it hits $\mathcal{R}$ we obtain 
\begin{equation} \label{eq:rho1_case1}
 \rho_1 = \frac{\alpha(1-\alpha)k(0)}{\beta k(1)}.
\end{equation}
In this case, the singular orbit consists in a slow motion along $\mathcal{C}_0^r$ followed by a layer at $\xi=1$. The nature of this layer -- in particular its orientation -- depends on $\alpha$ and $\beta$ as follows:
\begin{itemize}
 \item When $\alpha < \rho_c^0$ and $\rho^\ast(\alpha) < \beta < 1-\rho^\ast(\alpha)$, i.e.~for $(\alpha,\beta) \in \mathcal{G}_1$, $\rho$ increases along the boundary layer at $\xi=1$. The corresponding singular solution is therefore of type $1$ (see Figure \ref{fig:singsol1}(a)).
 \item When $\alpha < \rho_c^0$ and $\beta > 1-\rho^\ast(\alpha)$, i.e.~for $(\alpha,\beta) \in \mathcal{G}_2$, $\rho$ decreases along the boundary layer at $\xi=1$. Therefore, the corresponding singular solution is of type $2$ (see Figure \ref{fig:singsol1}(b)). 
\end{itemize}
We note that when $\alpha < \rho_c^0$ and $\beta=1-\rho^\ast(\alpha)$ (i.e.~on $\gamma_{12}$) there is no layer at $\xi=1$.\\

\emph{Case 2: $\mathcal{L}_{\xi^\ast}^+ \cap \mathcal{R}_{\xi^\ast}^- \neq \emptyset$.} We observe that by definition $\mathcal{L}_{\xi^\ast}^+ \subset \mathcal{C}_0^a$ and $\mathcal{R}_{\xi^\ast}^- \subset \mathcal{C}_0^r$. Thus, this case corresponds to having $\alpha \geq \rho_c^0$ and $\beta \geq 1-\rho_c^1$ and their non-empty intersection is realised at the canard point $p^\ast$ (see \eqref{eq:P}). This implies that the slow segment of these singular orbits is the canard orbit $S_c$.\\
In particular, since $\rho^+(\xi^\ast,s)=\frac12=\rho^-(\xi^\ast,t)$, it follows that $\rho^+(0,s)=1-\rho_c^0$ and $\rho^-(1,t)=1-\rho_c^1$. Consequently, the start/end point of the reduced flow are fixed by the canard and correspond to $p_c^0$ and $p_c^1$ respectively, whereas boundary layers at $\xi=0$, $1$ may arise depending on $\alpha$ and $\beta$. It is then possible to determine the starting and ending points of the orbit by following the flow of the layer problem (backwards at $\xi=0$ and forward at $\xi=1$); this leads to
\begin{equation} \label{eq:rho01_case2}
 \rho_0 = 1-\frac{k(\xi^\ast)}{4 \alpha k(0)}, \qquad \rho_1 = \frac{k(\xi^\ast)}{4 \beta k(1)}.
\end{equation}
In particular, we have:
\begin{itemize}
 \item When $\rho_c^0 < \alpha < 1-\rho_c^0$, $\rho$ increases along the boundary layer at $\xi=0$. Additionally:
 \begin{itemize}
 \item If $1-\rho_c^1 < \beta < \rho_c^1$, i.e.~for $(\alpha,\beta) \in \mathcal{G}_3$, $\rho$ increases along the boundary layer at $\xi=1$. This implies that the singular orbit is of type $3$ (see Figure \ref{fig:singsol1}(c)).
 \item If $\rho_c^1 < \beta < 1$, i.e.~for $(\alpha,\beta) \in \mathcal{G}_4$, $\rho$ decreases along the boundary layer at $\xi=1$. This implies that the singular orbit is of type $4$ (see Figure \ref{fig:singsol1}(d)).
 \end{itemize}
 \item When $1-\rho_c^0 < \alpha < 1$, $\rho$ decreases along the boundary layer at $\xi=0$. Additionally:
 \begin{itemize}
 \item If $1-\rho_c^1 < \beta < \rho_c^1$, i.e.~for $(\alpha,\beta) \in \mathcal{G}_5$, $\rho$ increases along the boundary layer at $\xi=1$. This implies that the singular orbit is of type $5$ (see Figure \ref{fig:singsol2}(a)).
 \item If $\rho_c^1 < \beta < 1$, i.e.~for $(\alpha,\beta) \in \mathcal{G}_6$, $\rho$ decreases along the boundary layer at $\xi=1$. This implies that the singular orbit is of type $6$ (see Figure \ref{fig:singsol2}(b)).
 \end{itemize}
\end{itemize}
We note that when $\alpha=1-\rho_c^0$ and $\beta \geq 1-\rho_c^1$ (i.e.~on $\gamma_{35} \cup \gamma_{46}$) we have no boundary layer at $\xi=0$. Moreover, when $\beta=\rho_c^1$ and $\alpha \geq \rho_c^0$ (i.e.~on $\gamma_{34} \cup \gamma_{56}$) we have no boundary layer at $\xi=1$.\\

\emph{Case 3: $\mathcal{L}_{\xi^\ast}^+ \cap r_{\xi^\ast} \neq \emptyset$.} By definition of $r_{\xi^\ast}$, this occurs only when $\beta < 1-\rho_c^1$. In this case, we have $r_{\xi^\ast} \in \mathcal{L}_{\xi^\ast}^+$, which implies that $p_1 = r$ and, consequently, $\rho_1=1-\beta$ (i.e., no boundary layers emerge at $\xi=1$). Moreover, since $\rho(0,s)=\rho_\ast(\beta)$, following the layer problem backwards until it hits $\mathcal{L}$, we obtain
\begin{equation} \label{eq:rho0_case3}
 \rho_0=1-\frac{\beta(1-\beta)k(1)}{\alpha k(0)}.
\end{equation}
Consequently, the slow motion is here entirely contained in $\mathcal{C}_0^a$ and there is a boundary layer at $\xi=0$, whose nature depends on $\alpha$ as follows:
\begin{itemize}
 \item If $1-\rho_\ast(\beta) < \alpha < \rho_\ast(\beta)$ and $0 < \beta < 1-\rho_c^1 $, i.e.~if $(\alpha,\beta) \in \mathcal{G}_7$, $\rho$ is increasing and the singular solution is of type $7$ (see Figure \ref{fig:singsol2}(c)).
 \item If $\rho_\ast(\beta) < \alpha < 1$ and $0 < \beta < 1-\rho_c^1$, i.e.~if $(\alpha,\beta) \in \mathcal{G}_8$, $\rho$ is decreasing, and we have a singular solution of type $8$ (see Figure \ref{fig:singsol2}(d)).
\end{itemize}
We note that when $\alpha=\rho_\ast(\beta)$ and $\beta < 1-\rho_c^1$ (i.e.~on $\gamma_{78}$), there are no boundary layers.
\end{proof}

\begin{remark}
 The construction in Case 3 is essentially the same as the one in Case 1 upon reversal of the flow direction in \eqref{eq:sys}.
\end{remark}

\begin{remark} \label{rem:oldpap}
 Singular solutions of type 1, 2, 7, and 8 can be obtained also applying the same strategy used in \cite[Proposition 2]{Iuorio_2022}, as their slow portion is entirely contained in one of the two halves of the critical manifold ($\mathcal{C}_0^r$ in the case of type 1, 2, $\mathcal{C}_0^r$ in the case of type 7, 8). Therefore, it would be possible to only focus on the flow of the manifold $\mathcal{L}$ of left boundary conditions up to $\xi=1$ and check its intersection with the projection of the manifold $\mathcal{R}$ of right boundary conditions on $\mathcal{C}_0$.
\end{remark}

\begin{remark} \label{rem:keffect}
 Different values of $k(0)$, $k(1)$, and $k(\xi^\ast)$ influence the structure of the bifurcation diagram sketched in \ref{fig:bifdiag} only quantitatively. In particular, the smaller $k(\xi^\ast)$ is, the larger regions $\mathcal{G}_i$, $i=3,4,5,6$ are, consequently reducing the sizes of regions $\mathcal{G}_i$, $i=1,2,7,8$. Recall that smaller values of $k(\xi^\ast)$ correspond to a narrower bottleneck.
\end{remark}


\begin{figure}[!ht]
    \centering
    \includegraphics[scale=0.5]{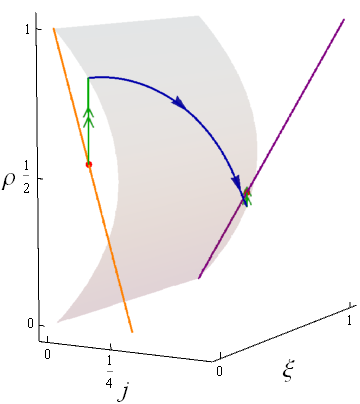}
    \caption{Schematic representation of a singular solution of \eqref{eq:sys}-\eqref{eq:bc} with $a=0.3$, $b=1.5$, $\alpha=0.3$, and $\beta=0.6$ (i.e. $(\alpha,\beta) \in \mathcal{G}_3$). The solution has boundary layers at $\xi=0$ and $\xi=1$, while the slow portion of the orbit coincides with $S_c$.}
    \label{fig:ss_canard}
\end{figure}

\begin{figure}
\centering
 (a) Region $\mathcal{G}_1$: $\alpha=0.1$, $\beta=0.4$\\
 \includegraphics[scale=0.45]{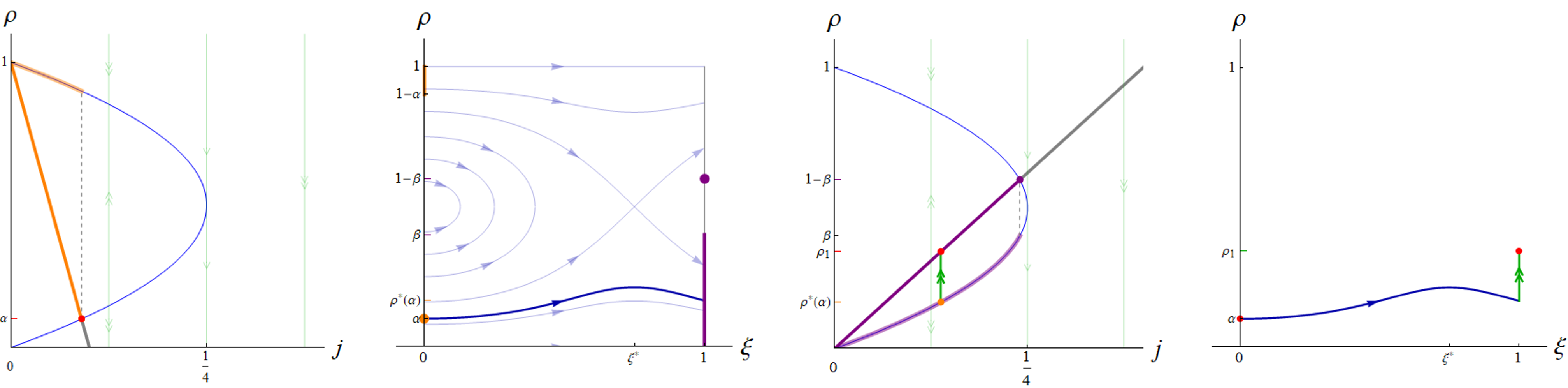}\\
 \vspace{.5cm}
 (b) Region $\mathcal{G}_2$: $\alpha=0.1$, $\beta=0.9$\\
 \includegraphics[scale=0.45]{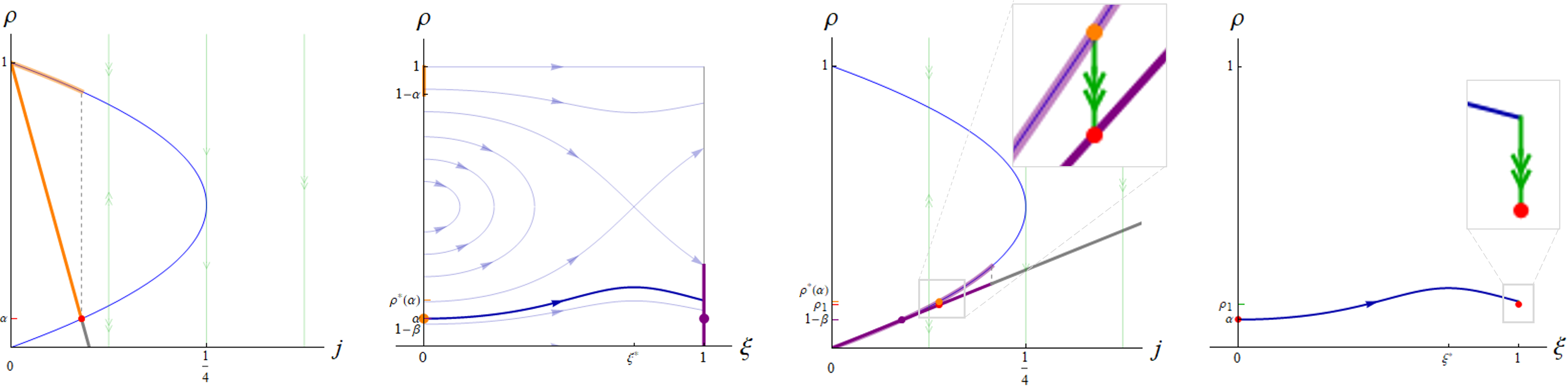}\\
 \vspace{.5cm}
 (c) Region $\mathcal{G}_3$: $\alpha=0.3$, $\beta=0.6$\\
 \includegraphics[scale=0.45]{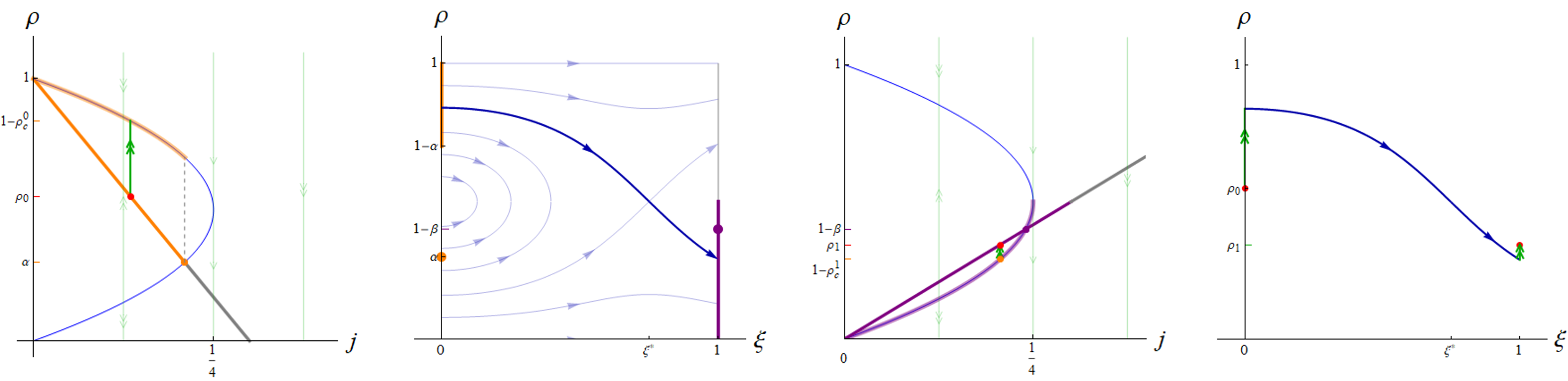}\\
 \vspace{.5cm}
 (d) Region $\mathcal{G}_4$: $\alpha=0.3$, $\beta=0.8$\\
 \includegraphics[scale=0.45]{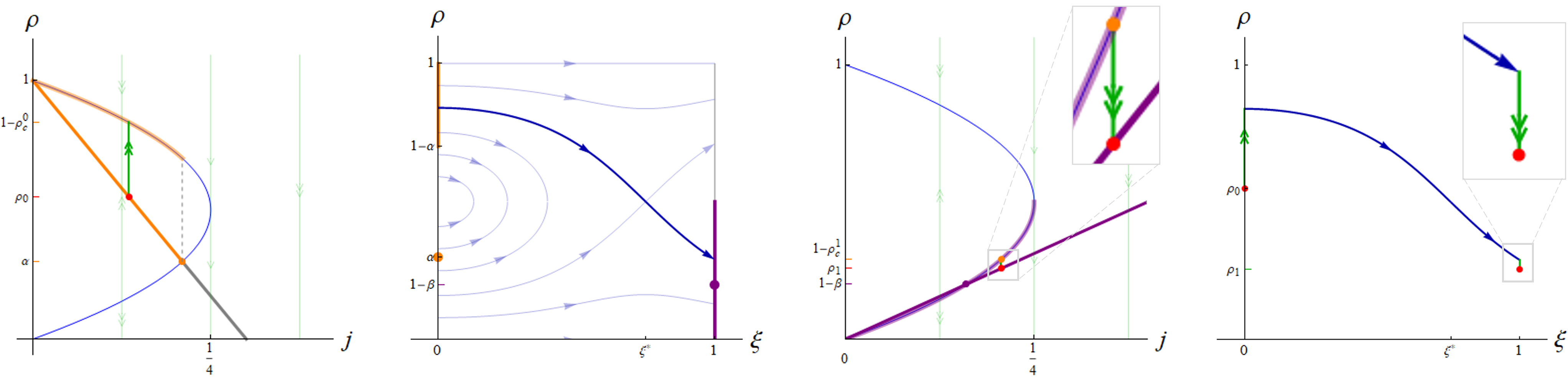}\\
 \caption{Schematic representation of singular solutions of type 1-4 (rows 1-4, respectively). \emph{First column}: Boundary conditions at $\xi=0$ in $(j,\rho)$-space: the orange line is $\mathcal{L}$, while the orange curve is $\mathcal{L}^+$. The red dot represents $p_0$ and the green line illustrates the layer where $\rho$ increases (type 3, 4). \emph{Second column}: Slow evolution on $\mathcal{C}_0$ (blue curve). The orange lines are the projection of $\mathcal{L}$ and $\mathcal{L}^+$ on $\mathcal{C}_0$, while the purple one represents the projection of $\mathcal{R}^-$ on $\mathcal{C}_0$. The orange dot corresponds to $l$, while the purple dot corresponds to $r$. For orbits of type 3 and 4 the slow flow involves the passage through the canard point $p^\ast$. \emph{Third column}: Boundary conditions at $\xi=1$ in $(j,\rho)$-space. The red dot corresponds to $p_1$, while the purple line and curve represent the manifolds $\mathcal{R}$ and $\mathcal{R}^-$, respectively. The green line corresponds to the layer of the singular orbit where $\rho$ increases (type 1-3)/decreases (type 2-4). \emph{Fourth column}: Singular solution in $(\xi,\rho)$-space.}
 \label{fig:singsol1}
\end{figure}

\begin{figure}
\centering
 (a) Region $\mathcal{G}_5$: $\alpha=0.9$, $\beta=0.6$\\
 \includegraphics[scale=0.45]{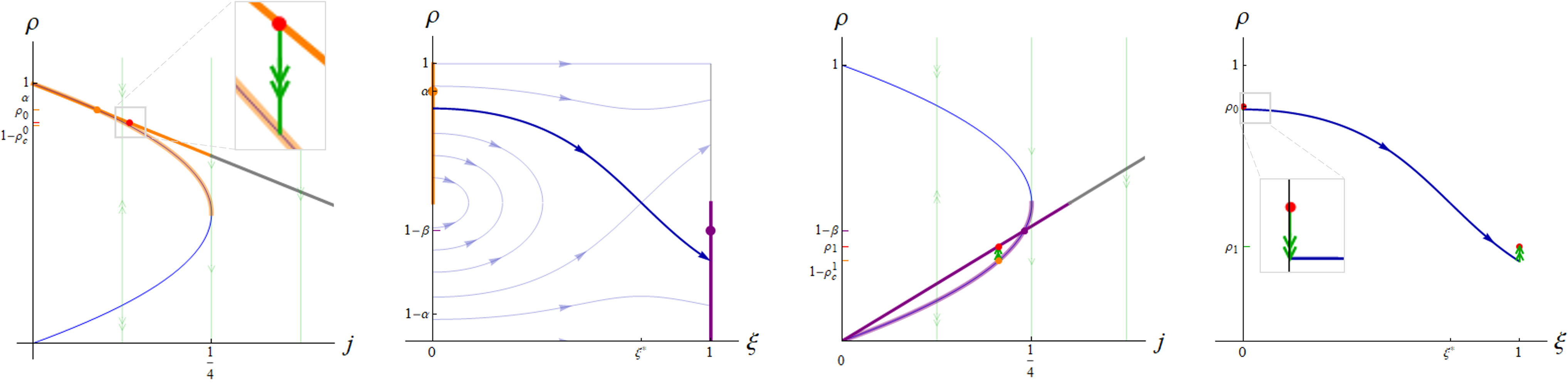}\\
 \vspace{.5cm}
 (b) Region $\mathcal{G}_6$: $\alpha=0.9$, $\beta=0.8$\\
 \includegraphics[scale=0.45]{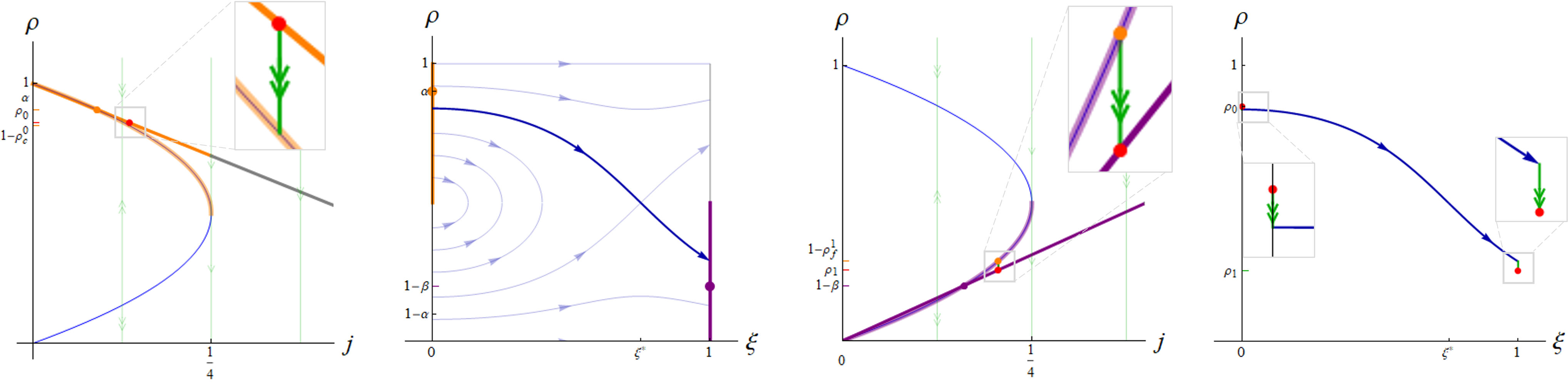}\\
 \vspace{.5cm}
 (c) Region $\mathcal{G}_7$: $\alpha=0.7$, $\beta=0.2$\\
 \includegraphics[scale=0.45]{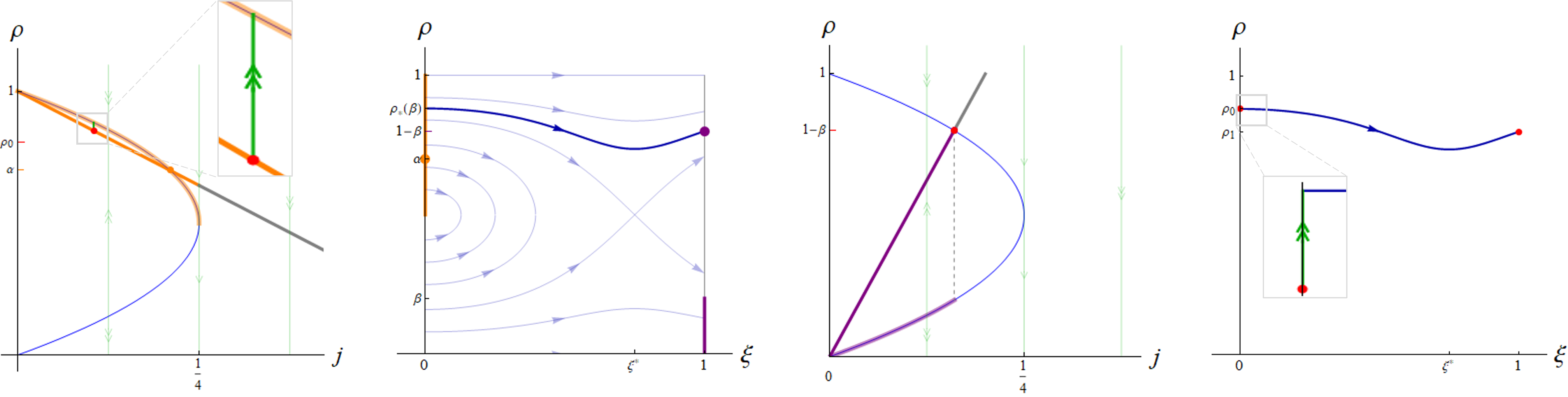}\\
 \vspace{.5cm}
 (d) Region $\mathcal{G}_8$: $\alpha=0.9$, $\beta=0.2$\\
 \includegraphics[scale=0.45]{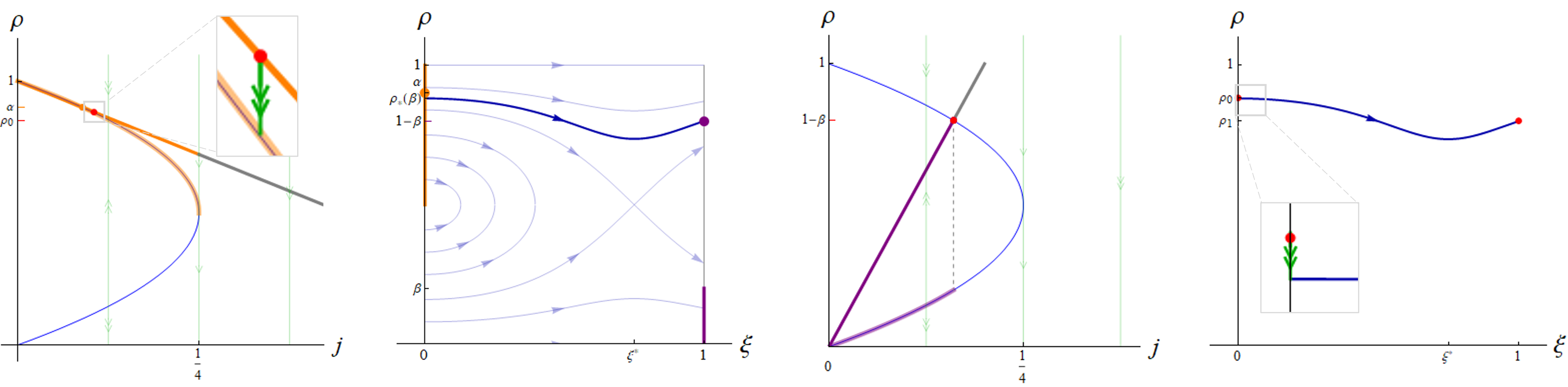}
 \caption{Schematic representation of singular solutions of type 5-8 (rows 1-4, respectively). \emph{First column}: Boundary conditions at $\xi=0$ in $(j,\rho)$-space: the orange line is $\mathcal{L}$, while the orange curve is $\mathcal{L}^+$. The red dot represents $p_0$ and the green line illustrates the layer where $\rho$ increases (type 7)/decreases (type 5, 6, 8). \emph{Second column}: Slow evolution on $\mathcal{C}_0$ (blue curve). The orange lines are the projection of $\mathcal{L}$ and $\mathcal{L}^+$ on $\mathcal{C}_0$, while the purple one represents the projection of $\mathcal{R}^-$ on $\mathcal{C}_0$. The orange dot corresponds to $l$, while the purple dot corresponds to $r$. For orbits of type 5 and 6 the slow flow involves the passage through the canard point $p^\ast$. \emph{Third column}: Boundary conditions at $\xi=1$ in $(j,\rho)$-space. The red dot corresponds to $p_1$, while the purple line and curve represent the manifolds $\mathcal{R}$ and $\mathcal{R}^-$, respectively. The green line corresponds to the layer of the singular orbit where $\rho$ increases (type 5)/decreases (type 6). \emph{Fourth column}: Singular solution in $(\xi,\rho)$-space.}
 \label{fig:singsol2}
\end{figure}

\begin{remark}[Degenerate cases including continua of singular solutions] \label{rem:fmam}
When $\alpha \leq \rho_c^0$ and $\beta=\rho^\ast(\alpha)$ - i.e.~when $(\alpha,\beta) \in \gamma_{17}$ - we have that both $\mathcal{L}_{\xi^\ast}^+ \cap r_{\xi^\ast}$ and $l_{\xi^\ast} \cap \mathcal{R}_{\xi^\ast}^-$ are non-empty. Consequently, there are two possible reduced solutions, satisfying (see Figure \ref{fig:sp_cases}(a))
  \begin{equation}
  \text{(a)} \, \begin{cases}
              \rho(0,s)=\alpha,\\
              \rho(1,s)=\beta,
             \end{cases}
  \text{ or } \quad \text{ (b) } \begin{cases}
              \rho(0,s)=1-\alpha,\\
              \rho(1,s)=\rho_1=1-\beta.
             \end{cases}
 \end{equation}
 In this case, we have a continuum of singular solutions, since at any $\xi \in [0,1]$ it is possible to jump from the slow trajectory of the reduced flow in (a) to the one in (b) via the flow of the layer problem. Analogously, we obtain a continuum of singular solutions when $\alpha=\rho_c^0$, $\beta \geq 1-\rho_c^1$, i.e.~when $(\alpha,\beta) \in \gamma_{13} \cup \gamma_{24}$. In this case, in fact, we have that both $\mathcal{L}_{\xi^\ast}^+ \cap \mathcal{R}_{\xi^\ast}^-$ and $l_{\xi^\ast} \cap \mathcal{R}_{\xi^\ast}^-$ are non-empty, and therefore there are two possible reduced solutions (with jumps possible at any $\xi \in [0,\xi^\ast]$ via the flow of the layer problem) satisfying (see Figure \ref{fig:sp_cases}(b))
 \begin{equation}
  \text{(c)} \, \begin{cases}
              \rho(0,s)=\rho_c,\\
              \rho(1,s)=1-\rho_c^1,
             \end{cases}
  \text{ or } \quad \text{ (d) } \begin{cases}
              \rho(0,s)=1-\rho_c,\\
              \rho(1,s)=1-\rho_c^1.
             \end{cases}
 \end{equation}
 A last example of such a situation is given by $\alpha \geq \rho_c^0$, $\beta = 1-\rho_c^1$, i.e.~when $(\alpha,\beta) \in \gamma_{37} \cup \gamma_{58}$. Here,  both $\mathcal{L}_{\xi^\ast}^+ \cap \mathcal{R}_{\xi^\ast}^-$ and $\mathcal{L}_{\xi^\ast}^+ \cap r_{\xi^\ast}$ are non-empty, leading again to two possible reduced solutions (with jumps possible at any $\xi \in [\xi^\ast,1]$ via the flow of the layer problem) satisfying (see Figure \ref{fig:sp_cases}(c))
 \begin{equation}
  \text{(c)} \, \begin{cases}
              \rho(0,s)=1-\frac{k(\xi^\ast)}{4 \alpha k(0)},\\
              \rho(1,s)=\rho_c^1,
             \end{cases}
  \text{ or } \quad \text{ (d) } \begin{cases}
              \rho(0,s)=1-\frac{k(\xi^\ast)}{4 \alpha k(0)},\\
              \rho(1,s)=1-\rho_c^1.
             \end{cases}
 \end{equation}
 Since in these degenerate cases singular solutions are not unique, our method based on transversality arguments to infer persistence of singular solutions to \eqref{eq:reduced equation rho}-\eqref{eq:boundary conditions reduced rho} for $0 < \varepsilon \ll 1$ do not apply. Moreover, at the point $\alpha = \rho_c^0$, $\beta=1-\rho_c^1$ -- i.e. at the intersection of $\gamma_{17}$, $\gamma_{13}$, and $\gamma_{37}$ -- the situation is even more degenerate as the three previous scenarios collide. We leave the analysis of these more delicate situations for future work.
\end{remark}

\begin{figure}[!ht]
      \centering
     \begin{minipage}{0.3\textwidth}
          \centering
          \includegraphics[scale=.35]{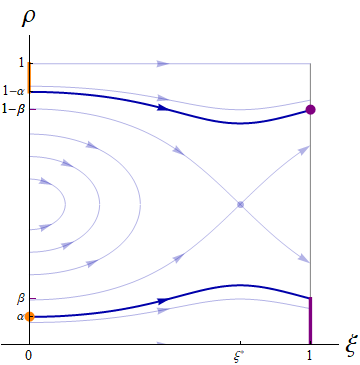}
          
          (a)
     \end{minipage}
     \hfill
     \begin{minipage}{0.3\textwidth}
          \centering
          \includegraphics[scale=.35]{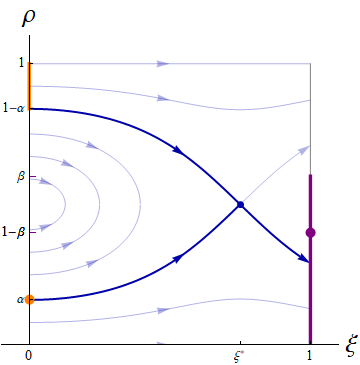}
          
          (b)
     \end{minipage}
          \hfill
     \begin{minipage}{0.3\textwidth}
          \centering
          \includegraphics[scale=.35]{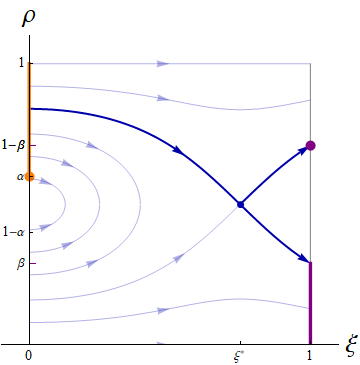}
          
          (c)
     \end{minipage}
     \caption{Schematic representation in $(\xi,\rho)$-space of the slow portions (blue curves) of the possible singular orbits for (a) $(\alpha,\beta) \in \gamma_{17}$, (b) $(\alpha,\beta) \in \gamma_{13} \cup \gamma_{24}$, and (c) $(\alpha,\beta) \in \gamma_{37} \cup \gamma_{58}$. The orange and purple curves correspond to the projection of $\mathcal{L}^+$ and $\mathcal{R}^-$, respectively, on the $(\xi,\rho)$-space. Fast jumps from the slow solution in $\mathcal{C}_0^r$ to the slow solution in $\mathcal{C}_0^a$ are possible (a) at each $\xi \in [0,1]$, (b) for $\xi \in [0,\xi^\ast]$, and (c) for $\xi \in [\xi^\ast,1]$.}
     \label{fig:sp_cases}
\end{figure}

We now prove that the singular solutions from Proposition \ref{prop:singsol} perturb to solutions of \eqref{eq:reduced equation rho}-\eqref{eq:boundary conditions reduced rho} for $\varepsilon$ sufficiently small.

\begin{theorem} \label{thm:k}
 Let $k \in C^2([0,1])$ be a positive function satisfying Assumption \eqref{k-assumption}.
 For each $(\alpha,\beta) \in \mathcal{G}_i$, $i=1,\dots,8$, the boundary value problem \eqref{eq:reduced equation} has a unique solution $\rho(x,\alpha,\beta,\varepsilon)$ for $\varepsilon$ sufficiently small. In the phase-space formulation \eqref{eq:sys_fast}, this solution corresponds to an orbit $\Gamma^i_\varepsilon$ which is $\mathcal{O}(\varepsilon^\mu)$-close to $\Gamma^i$ in terms of Hausdorff distance, with $\mu=1$ for $i=1,2,7,8$ and $\mu=1/2$ for $i=3,4,5,6$.
\end{theorem}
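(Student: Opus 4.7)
The plan is to promote each singular orbit $\Gamma^i$ from Proposition~\ref{prop:singsol} to an actual solution of the fast system \eqref{eq:sys_fast} satisfying \eqref{eq:bc} by a perturbed shooting argument in which the forward evolution of the boundary manifold $\mathcal{L}$ is matched with the backward evolution of $\mathcal{R}$ at a suitable intermediate section. The two tools driving the argument are Fenichel's theorem, which provides smooth perturbed slow manifolds $\mathcal{C}_\varepsilon^a, \mathcal{C}_\varepsilon^r$ that are $\mathcal{O}(\varepsilon)$-close to $\mathcal{C}_0^a, \mathcal{C}_0^r$ on compact subsets bounded away from the fold line $F$, and the folded-saddle canard analysis of \cite{Szmolyan_2001}, which extends these manifolds into a neighborhood of the canard point $p^\ast$.

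In the non-canard regions $\mathcal{G}_1, \mathcal{G}_2, \mathcal{G}_7, \mathcal{G}_8$, the slow portion of $\Gamma^i$ lies entirely on one branch of the critical manifold and the argument follows \cite[Proposition~2]{Iuorio_2022} with minor modifications. For $i \in \{7,8\}$, forward integration of the layer equation contracts $\mathcal{L}$ onto $\mathcal{C}_\varepsilon^a$ within a boundary layer of width $\mathcal{O}(\varepsilon)$; the slow flow on $\mathcal{C}_\varepsilon^a$ is a regular perturbation of \eqref{eq:k_redpb_xrho}, and the transversal matching at $\xi = 1$ with $\mathcal{R}$ (through the appropriate layer fiber when required) persists by the implicit function theorem. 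For $i \in \{1,2\}$ the same construction is carried out with the roles of $\mathcal{L}$ and $\mathcal{R}$ exchanged and time reversed, since the slow motion now takes place on the repelling branch $\mathcal{C}_0^r$, which is attracting in backward time. In both cases the resulting orbit is $\mathcal{O}(\varepsilon)$-close to $\Gamma^i$ in Hausdorff distance, yielding $\mu = 1$.

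In the canard regions $\mathcal{G}_3$--$\mathcal{G}_6$, the singular orbit traverses both branches of $\mathcal{C}_0$ via $p^\ast$, and the core step is to show that the extensions of $\mathcal{C}_\varepsilon^a$ and $\mathcal{C}_\varepsilon^r$ through the fold intersect along a maximal canard solution which is $\mathcal{O}(\sqrt{\varepsilon})$-close to $S_c$. This is exactly the folded-saddle canard statement obtained by cylindrical blow-up of $p^\ast$ in \cite{Szmolyan_2001}: in the blown-up system the folded saddle becomes a genuine hyperbolic saddle whose stable and unstable manifolds lift to the continued slow manifolds, and their transversal intersection on the blow-up sphere provides the persistent canard. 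I would then glue this canard segment to the forward boundary-layer projection of $\mathcal{L}$ onto $\mathcal{C}_\varepsilon^a$ near $\xi = 0$ and to the backward boundary-layer projection of $\mathcal{R}$ onto $\mathcal{C}_\varepsilon^r$ near $\xi = 1$, using that the singular intersections identified in Case~2 of the proof of Proposition~\ref{prop:singsol} are transversal throughout the open regions $\mathcal{G}_i$. The rate $\sqrt{\varepsilon}$ is forced by the parabolic scaling intrinsic to the blow-up chart near $F$.

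The main obstacle is precisely this canard step: while Fenichel theory controls the orbit away from $p^\ast$ with $\mathcal{O}(\varepsilon)$ accuracy, the quantitative matching across the fold must be carried out in blown-up coordinates, and the transversality of the boundary-layer images of $\mathcal{L}$ and $\mathcal{R}$ with respect to the canard inside the singular chart needs to be checked individually in each of the four regions. Uniqueness in each $\mathcal{G}_i$ follows from isolation of the transversal intersection, and it degenerates exactly on the boundary curves flagged in Remark~\ref{rem:fmam}, where the implicit function theorem no longer applies.
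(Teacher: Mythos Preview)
Your proposal is correct and follows essentially the same route as the paper: Fenichel theory plus the shooting argument from \cite{Iuorio_2022} for the non-canard regions $\mathcal{G}_1,\mathcal{G}_2,\mathcal{G}_7,\mathcal{G}_8$, and the folded-saddle blow-up analysis of \cite{Szmolyan_2001} for $\mathcal{G}_3$--$\mathcal{G}_6$, with the $\sqrt{\varepsilon}$ rate arising from the scaling chart. The paper streamlines your canard step by noting that the forward and backward evolutions $\mathcal{M}_{0,\varepsilon}$, $\mathcal{M}_{1,\varepsilon}$ of $\mathcal{L}$, $\mathcal{R}$ are exponentially close to $\mathcal{C}_\varepsilon^a$, $\mathcal{C}_\varepsilon^r$ respectively, so their transversal intersection at $\xi=\xi^\ast$ is inherited directly from the maximal-canard intersection $\mathcal{C}_\varepsilon^a \pitchfork \mathcal{C}_\varepsilon^r$ and no region-by-region verification inside the blow-up chart is needed.
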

\begin{proof}
The solutions for $\varepsilon$ small are obtained by perturbing from the singular solutions $\Gamma^i$, $i=1,\dots,8$. More precisely, we show that the manifold obtained by flowing the line $\mathcal{L}$ of points corresponding to the boundary conditions at $\xi=0$ to $\xi=\xi^\ast$ for $\varepsilon$ small intersects the manifold obtained by flowing the line $\mathcal{R}$ of points corresponding to the boundary conditions at $\xi=1$ to $\xi=\xi^\ast$ in a point which is close to the corresponding point of the singular solution. Analogously to Proposition \ref{prop:singsol}, this is done by considering three cases.\\

\emph{Case 1: $(\alpha,\beta) \in \mathcal{G}_i$, $i=1,2$}. In this case,
the proof is completely analogous to Case 1 in \cite[Theorem 2]{Iuorio_2022}. In particular, it is possible to show that for $0 < \varepsilon \ll 1$ the (forward) flow defined by \eqref{eq:sys} takes a suitable small segment of $\mathcal{L}$ to a smooth, two-dimensional manifold $\mathcal{M}_{0,\varepsilon}$, which reduces to a curve $\mathcal{L}_{1,\varepsilon}$ when projected in the plane $\xi=1$. Such curve intersects $\mathcal{R}$ in a point $p_{1,\varepsilon}$ which corresponds to the right end-point of the solution of the boundary value problem. The full solution for $\xi \in [0,1]$ is then obtained by following the flow backward from $p_{1,\varepsilon}$ to $\xi=0$. In this case, the perturbed orbits are $\mathcal{O}(\varepsilon)$ close to the corresponding singular ones as all perturbations are $C^1$ in $\varepsilon$.\\

\emph{Case 2: $(\alpha,\beta) \in \mathcal{G}_i$, $i=3,4,5,6$}. In this case, the singular solution starts with a layer connecting the point $p_0 \in \mathcal{L}$ to the point $p_c^0$ on $S_c$, then follows the canard through the canard point $p^\ast$ up to $\xi=1$, and finally ends with another layer connecting $p_c^1$ with the point $p_1 \in \mathcal{R}$.\\
To prove the persistence of this singular orbit, we flow the line $\mathcal{L}$ of boundary conditions at $\xi=0$ forward, the line $\mathcal{R}$ of boundary conditions at $\xi=1$ backward, and show that they intersect transversally at $\xi = \xi^\ast$ for $\varepsilon$ small. Since the singular solution involves the point $p^\ast$ on the non-hyperbolic fold line $F$ and the emergence of a canard, results on extending GSPT to such problems \cite{Szmolyan_2001} are needed here. 

Fenichel theory \cite{Fenichel_1979} implies that away from the fold line $F$ (compact subsets of) $\mathcal{C}_0^a$ and $\mathcal{C}_0^r$ perturb smoothly to the slow manifolds $\mathcal{C}_\varepsilon^a$ and $\mathcal{C}_\varepsilon^r$, respectively. The results in \cite[Theorem 4.1]{Szmolyan_2001} imply that in a neighbourhood of the canard point $p^\ast$ the manifolds $\mathcal{C}_\varepsilon^a$ and $\mathcal{C}_\varepsilon^r$ intersect transversally in a maximal canard $S_c^\varepsilon$ (close to $S_c$) for $\varepsilon$ sufficiently small. As in case 1, consider a small segment of $\mathcal{L}$ containing $p_0$ and denote its extension by the forward flow of \eqref{eq:sys_fast} by $\mathcal{M}_{0,\varepsilon}$ for $\varepsilon$ small. Analogously, consider a small segment of $\mathcal{R}$ containing $p_1$ and denote its extension by the backward flow of \eqref{eq:sys_fast} by $\mathcal{M}_{1,\varepsilon}$ for $\varepsilon$ small (again a smooth, two-dimensional manifold). By Fenichel theory, the manifolds $\mathcal{M}_{0,\varepsilon}$ and $\mathcal{M}_{1,\varepsilon}$ are exponentially close to $\mathcal{C}_\varepsilon^a$ and $\mathcal{C}_\varepsilon^r$, respectively. Therefore, $\mathcal{M}_{0,\varepsilon}$ and $\mathcal{M}_{1,\varepsilon}$ also intersect transversally in a unique orbit, which is the unique solution to the boundary value problem (see Figure \ref{fig:thm_case2}). Here, the
$\mathcal{O}(\varepsilon^{1/2})$ distance between the perturbed and the corresponding singular solutions follows from the blow-up analysis in \cite{Szmolyan_2001}, since the effect of the perturbation in the scaling chart of the blow-up transformation is  of the order $\varepsilon^{1/2}$. \\

\begin{figure}[!ht]
	\centering
	\begin{overpic}[scale=.7]{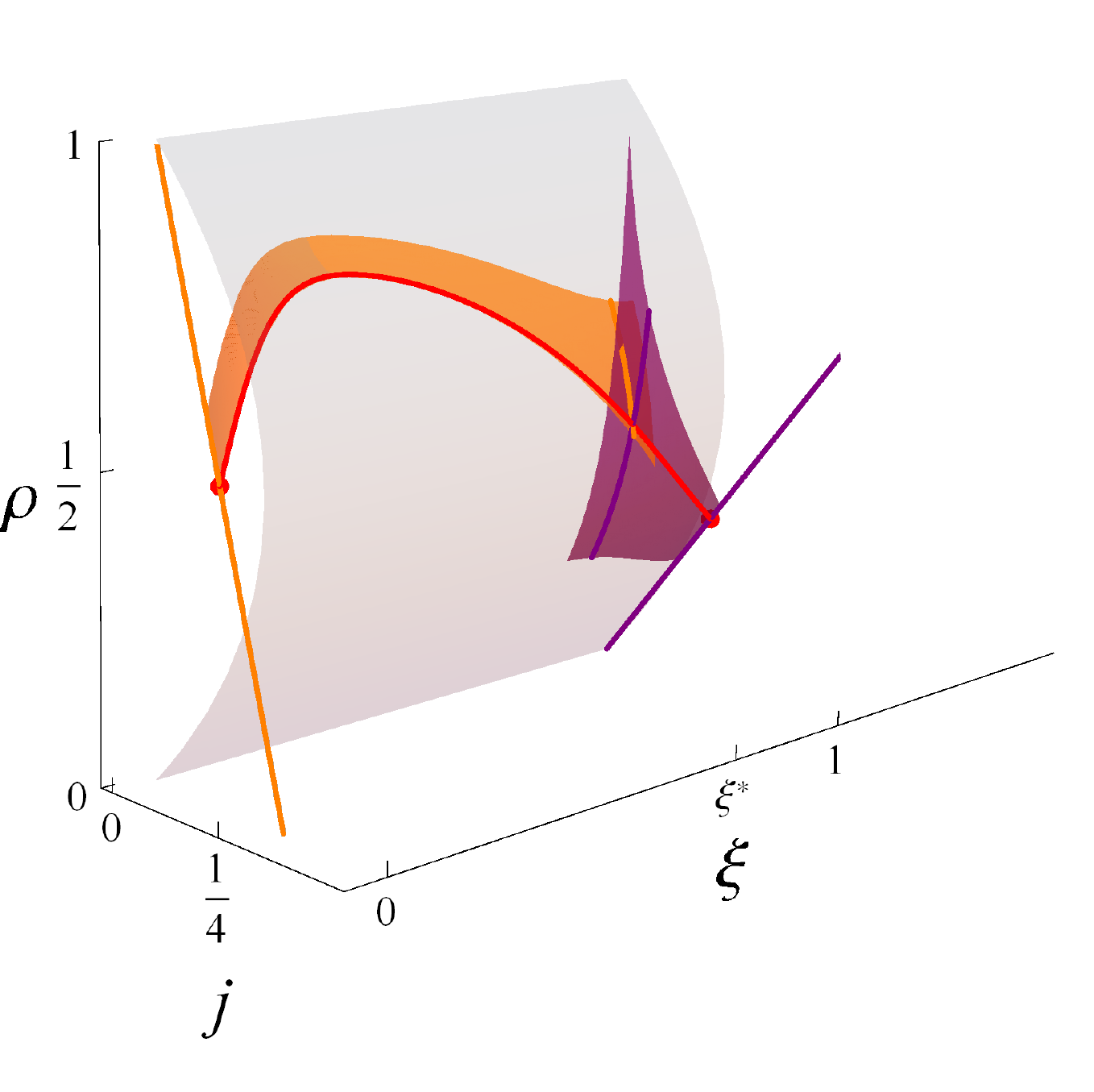}
    \put(30,80){$\mathcal{M}_{0,\varepsilon}$}
    \put(60,70){$\mathcal{M}_{1,\varepsilon}$}
	\end{overpic}
\caption{Schematic representation of a solution (continuous red curve) to the full problem \eqref{eq:reduced equation rho}-\eqref{eq:boundary conditions reduced rho} obtained for $(\alpha,\beta) \in \mathcal{G}_4$ with the strategy discussed in Case 2 of Theorem \ref{thm:k}. The orange and purple manifolds represent $\mathcal{M}_{0,\varepsilon}$ in $[0,\xi^\ast+\eta]$ and $\mathcal{M}_{1,\varepsilon}$ in  $[\xi^\ast-\eta,1]$ with $\eta=1/18$, respectively, whereas the red dots at $\xi=0$ and $\xi=1$ correspond to the initial and final point of the orbit. These manifolds intersect transversally at $\xi = \xi^\ast$, providing the uniqueness of the obtained solution. The passage close to the canard point $p^\ast$ leads to the $\mathcal{O}(\varepsilon^{1/2})$ distance from the corresponding singular orbit.}
\label{fig:thm_case2}
\end{figure}

\emph{Case 3: $(\alpha,\beta) \in \mathcal{G}_i$, $i=7,8$}. This case can be proved following the same approach as in \cite[Theorem 2]{Iuorio_2022}, and in particular is completely analogous to Case 1 upon reversal of the flow direction.
\end{proof}

\section{Numerical experiments}\label{sec:numerics}

In this section we present some numerical results for the steady-state problem \eqref{eq:reduced equation} which support the analysis of Section~\ref{sec:gspt}. More details about the numerical method employed here can be found in~\cite{Iuorio_2022}. All results are obtained for $\varepsilon = 10^{-3}$.\\

We first set $k(\xi) = 1 + a \cos\left(\frac{2\pi\xi}{b}\right)$, a choice that was already considered in Figure~\ref{fig:bifdiag} in the singular case. 
For $\varepsilon \neq 0$, Figure~\ref{fig:bifdiageps} illustrates some typical profiles, one per region defined by the GSPT analysis. The values chosen for $\alpha$ and $\beta$ are the same as in Figure~\ref{fig:bifdiag}, the solutions are qualitatively very close.

\begin{figure}[!ht]
    \centering
    \includegraphics[scale=1]{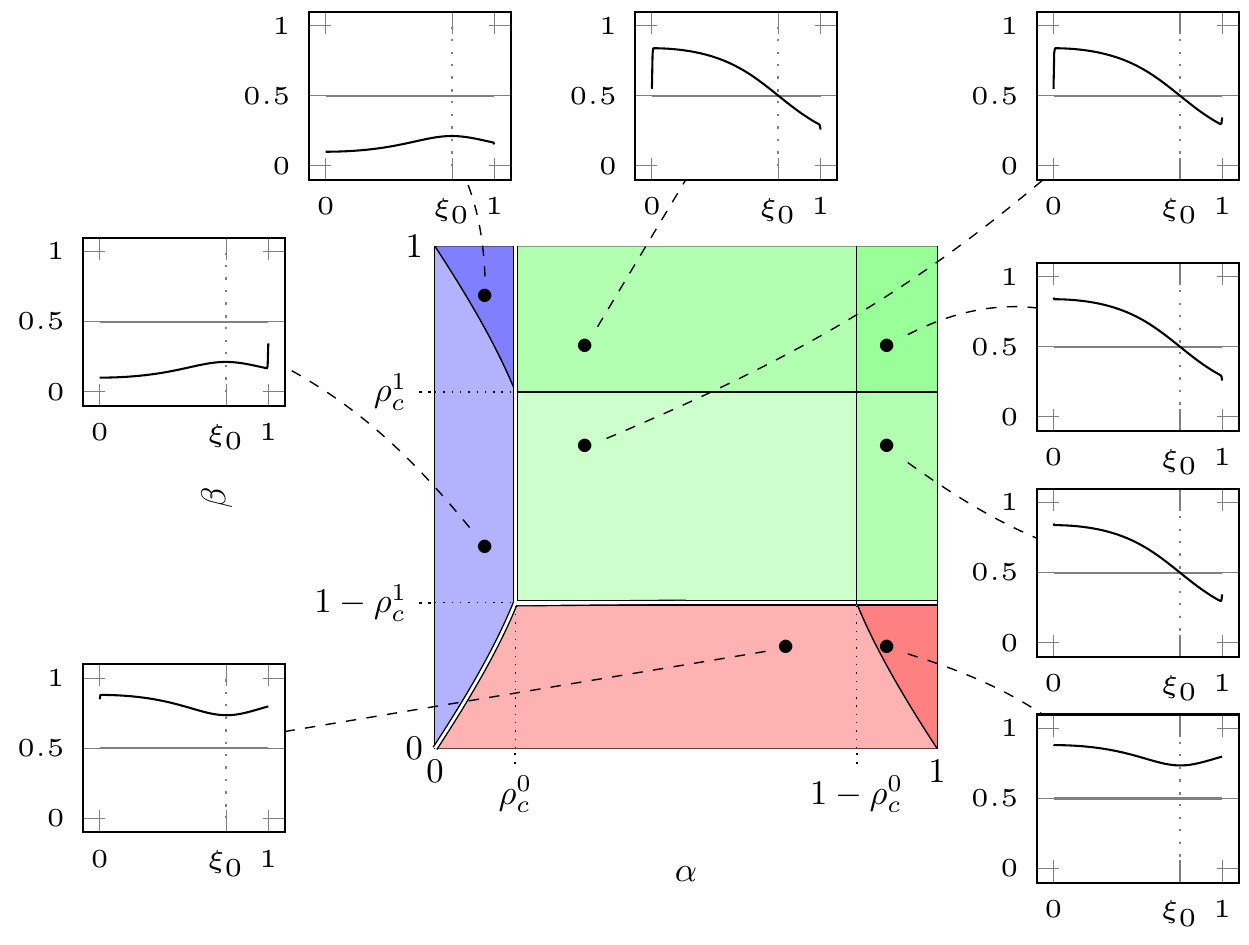}
    \caption{$(\alpha,\beta)$ bifurcation diagram for $\varepsilon=10^{-3}$. Here $k(\xi)=1+a\,\mathrm{cos}\left(\frac{2 \pi \xi}{b} \right)$ with $a=0.3$, $b=1.5$. The insets show the density $\rho$ as a function of $\xi$.}
    \label{fig:bifdiageps}
\end{figure}

Next investigate more realistic choices for $k$, which should mimic a corridor with a  bottleneck. We consider two regions of constant width that are connected by a narrower section in the middle. In particular, we consider a ``supergaussian'' profile for $k$:\[
k(\xi) = w_e - (w_e - w_m) e^{-\left|\frac{\xi-\xi_0}{d}\right|^6}\,,
\]
where $w_e$, $w_m$, $d$ and $\xi_0$ are positive parameters, corresponding to the width of the wider regions at the left and right, the width of the narrow middle section, the neck length and the neck position, respectively. We pick $w_e = 1$ and consider both $w_m = 0.9$, corresponding to wider neck and $w_m = 0.5$ which gives a more pronounced neck. The other parameters are taken as $w_e = 1$, $d = 0.2$, $\xi_0 = 0.6$, which gives a satisfactory, asymmetric width profile as shown in Figure~\ref{fig:supergaussian channel profile}.

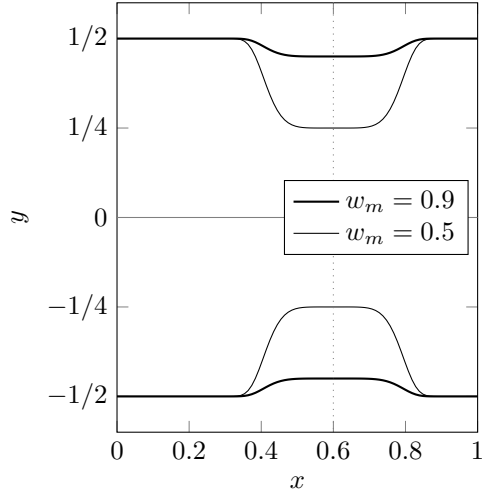
\begin{figure}[ht]
    \centering
    \pgfplotsset{samples=100,
        every axis legend/.append style={
            at={(0.975,0.5)},
            anchor=east,
        }
    }
    \begin{tikzpicture}
    \begin{axis}[xlabel=$x$,ylabel=$y$, xmin=0, xmax=1, ymin=-0.6, ymax=0.6, unit vector ratio=1,
        ytick={-0.5, -0.25, 0, 0.25, 0.5},
        yticklabels={$-1/2$, $-1/4$, $0$, $1/4$, $1/2$}]
    \addplot[domain=0:1,thick] {0.5*(1 - (1 - 0.9)*exp(-((x-0.6)/0.2)^6))};
    \addlegendentry{$w_m = 0.9$}
    \addplot[domain=0:1,thin] {0.5*(1 - (1 - 0.5)*exp(-((x-0.6)/0.2)^6))};
    \addlegendentry{$w_m = 0.5$}
    \addplot[domain=0:1,thick] {0.5*(-1 + (1 - 0.9)*exp(-((x-0.6)/0.2)^6))};
    \addplot[domain=0:1,thin] {0.5*(-1 + (1 - 0.5)*exp(-((x-0.6)/0.2)^6))};
    
    \addplot[domain=0:1,gray,thin] {0};
    \addplot[domain=-1:1,gray,thin,dotted] ({0.6},x);
    \end{axis}
    \end{tikzpicture}
    \caption{Representation of the 2D domain associated with a supergaussian $k$. The thick and bold lines correspond to $w_m = 0.5$ and $w_m = 0.9$, respectively.}
    \label{fig:supergaussian channel profile}
\end{figure}

Some characteristic profiles are shown in Figure~\ref{fig:bifdiageps supergauss}, along with the 8 regions defined by the GSPT analysis above. The selected values of the parameter pair $\alpha$ and $\beta$ are picked with exactly one pair value per region, and the same for both $w_m = 0.9$ and $w_m = 0.5$. The parameters $\alpha$ and $\beta$ are also chosen away from the $0$ and $1$, since those values lead to almost constant solutions for regions $\mathcal{G}_1$, $\mathcal{G}_2$, $\mathcal{G}_7$ and $\mathcal{G}_8$, which correspond to the blue and red shaded areas.
All chosen values are stated in Table~\ref{tab:supergauss params}.

\begin{table}
    \centering
    \begin{tabular}{cllllllll}
    \toprule
    &  \multicolumn{1}{c}{$\mathcal{G}_1$} &
       \multicolumn{1}{c}{$\mathcal{G}_2$}&
       \multicolumn{1}{c}{$\mathcal{G}_3$}&
       \multicolumn{1}{c}{$\mathcal{G}_4$}&
       \multicolumn{1}{c}{$\mathcal{G}_5$}&
       \multicolumn{1}{c}{$\mathcal{G}_6$}&
       \multicolumn{1}{c}{$\mathcal{G}_7$}&
       \multicolumn{1}{c}{$\mathcal{G}_8$}
    \\
    \midrule
    $\alpha$ & $0.119$  & $0.128$  & $0.5$    & $0.5$    & $0.881$  & $0.881$  & $0.5$    & $0.941$
    \\
    $\beta$  & $0.5$ & $0.941$ & $0.5$ & $0.881$ & $0.5$ & $0.881$ & $0.119$ & $0.128$ \\
    \bottomrule
    \end{tabular}
    \caption{Parameters for Figure~\ref{fig:bifdiageps supergauss}}
    \label{tab:supergauss params}
\end{table}

Generally speaking, we have three parameters ranges of interest, within which the stationary solutions share the same qualitative behaviour:
\begin{itemize}
    \item Small $\alpha$ (which corresponds to low inflow as in the blue regions $\mathcal{G}_1$ and $\mathcal{G}_2$), which leads to \emph{low} density stationary states with $\rho < \frac{1}{2}$ and a boundary layer on the right boundary.
    \item Small $\beta$ (which corresponds to low outflow as in the red regions $\mathcal{G}_7$ and $\mathcal{G}_8$), which leads to \emph{high} density stationary states with $\rho > \frac{1}{2}$ and a boundary layer on the left boundary.
    \item Large values of $\alpha$ and $\beta$ (corresponding to high inflow and outflow regimes as in the green regions $\mathcal{G}_3$ to $\mathcal{G}_6$), leading to density profiles going from high density on the left (before the bottleneck) to low density on the right (after).
    In this case, boundary layers a present on both boundaries.
\end{itemize}

Inside these three areas, solutions seem to depend only weakly on $\alpha$ and $\beta$, which affect the height of the boundary layers only.
It is only across the boundary between these areas (white lines) that pronounced qualitative changes occur.

\paragraph{Impact of the width of the bottleneck}

We now turn our attention to the influence of $w_m$ on the solutions. The first obvious difference in Figure~\ref{fig:bifdiageps supergauss} is the larger square in the center for small $w_m$, which corresponds to the region $\mathcal{G}_3$ in the singular analysis. This is explained by the very simple dependency of both $\rho_c^0$ and $\rho_c^1$ (which bound $\mathcal{G}_3$) on $k$, see \eqref{eq:sp_rho}.\\

\begin{figure}[!ht]
    \centering
    \includegraphics[scale=1]{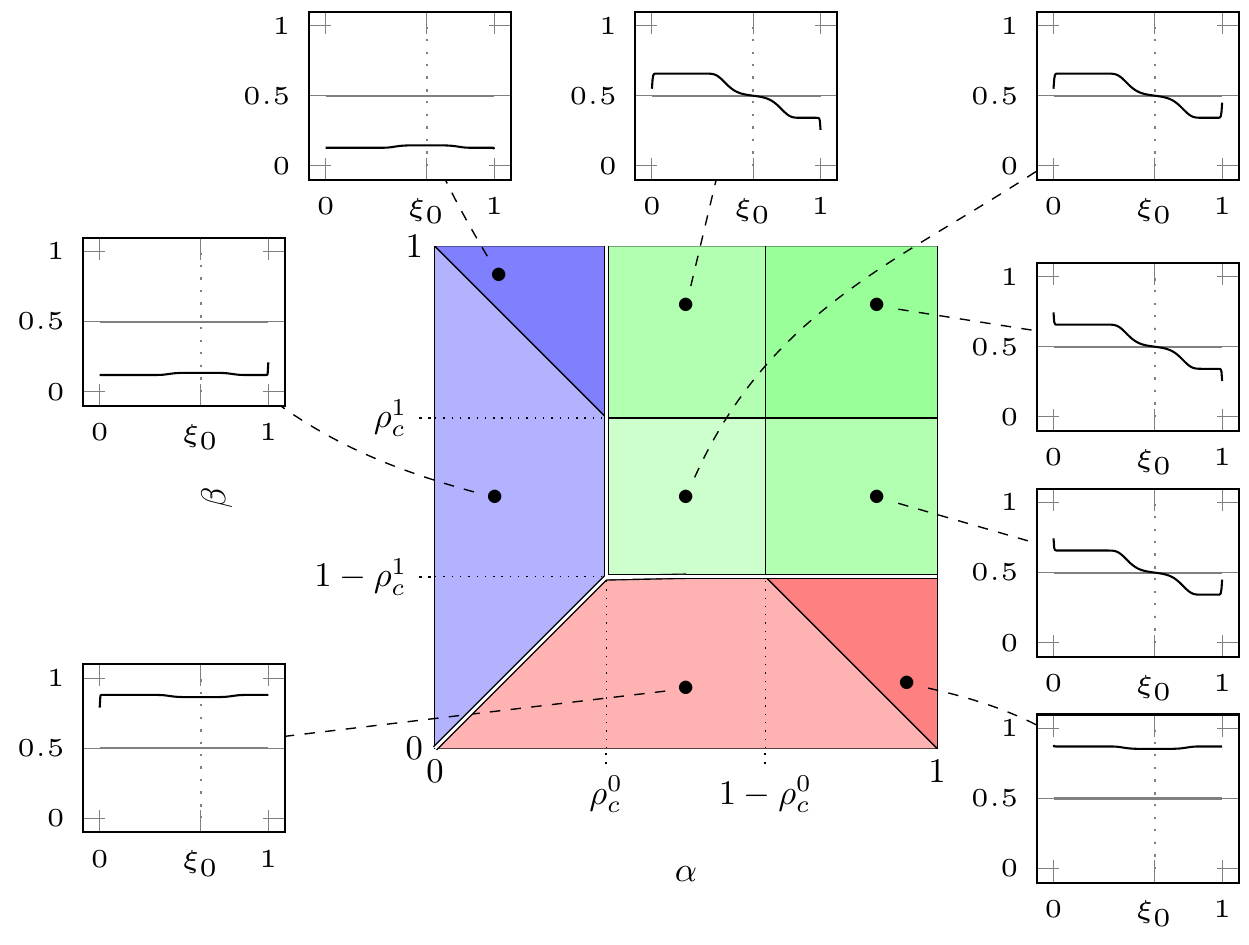}
    
    \vspace{1cm}
    \includegraphics[scale=1]{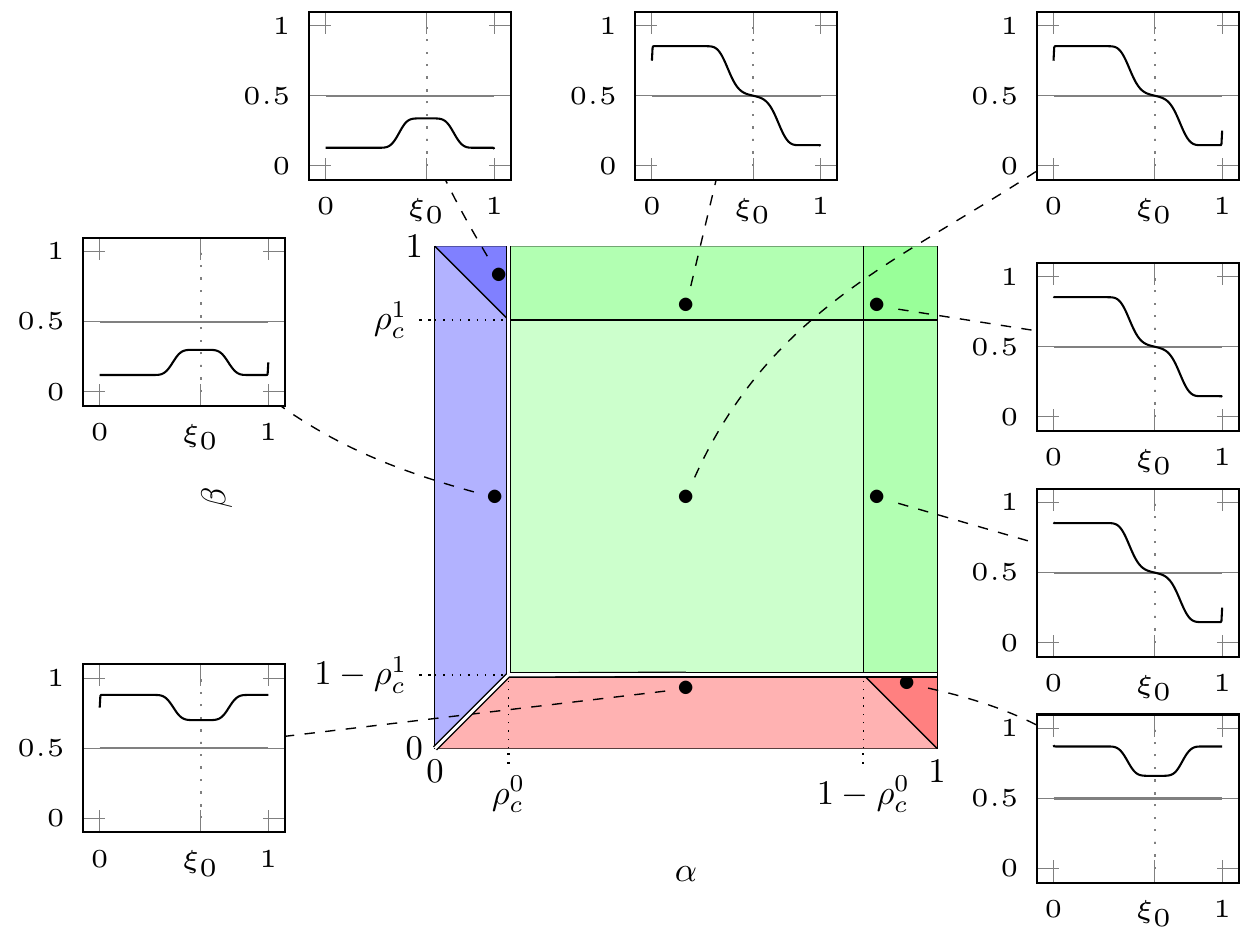}
    \caption{Phase diagrams in the $(\alpha, \beta)$ parameter space from the GSPT analysis for the supergaussian $k$ along with some typical \emph{non-singular} solutions for $w_m = 0.9$ (top) and $w_m = 0.5$ (bottom).}
    \label{fig:bifdiageps supergauss}
\end{figure}

In the regions of low (resp. high) density, in blue (resp. red) in Figure~\ref{fig:bifdiageps supergauss}, the density $\rho$ is roughly constant on large parts of the domain, with variations at the boundaries as well as at the front and back of the narrow section. Outside of it, $\rho$ takes similar values for both $w_m = 0.9$ and $w_m = 0.5$. Inside however, $\rho$ takes values much closer to $1/2$ for $w_m = 0.5$. Indeed, where $\rho$ is almost constant, the flux can be approximated as $\mathrm{J} = k \rho (1-\rho)$ ; since $\mathrm{J}$ is independent of $x$, lower values of $k$ correspond to $\rho$ closer to $1/2$. This also seems to indicate that in both the low and high density phases, the flux $\mathrm{J}$ for given $(\alpha, \beta)$ only depends weakly on $w_m$.

This numerical observation confirms the analytical results of Proposition \ref{prop:singsol} for the singular case ($\varepsilon=0$). We have in fact that in the regions of low (resp. high) density, studied in Case $1$, corresponding to regions $\mathcal{G}_1$ and $\mathcal{G}_2$ (resp. $3$, corresponding to $\mathcal{G}_7$ and $\mathcal{G}_8$), the density at the entrance (resp. exit) is given by $\alpha$ (resp. $\beta$) and hence is not affected by the features of the bottleneck. It follows that $\mathrm{J} = \alpha(1-\alpha)$ (resp. $\mathrm{J} = \beta(1-\beta)$).\\

In the green region (which could be argued to correspond to the so-called maximum flux phase for constant $k$), the situation is different. Although the profiles are qualitatively similar with a transition between a high density to a low density plateau, the densities for $w_m = 0.5$ (wider bottleneck, top) are much closer to $1/2$ for the values of $\alpha$ and $\beta$ which are considered. This relates to a higher flux for the wider bottleneck.

The computational results in these regions correspond to the analysis of Case $2$ (regions $\mathcal{G}_3$ to $\mathcal{G}_6$), a situation in which the density changes significantly (i.e. a boundary layer) in proximity of both the entrance and the exit, immediately preceded by a region where it is approximately constant. The density value in these areas is defined by $1-\rho_c^0$ and $1-\rho_c^1$, respectively. With the choice of parameters in Table~\ref{tab:supergauss params}, the approximation $k \simeq 1$ holds, at least in the first and last $10\%$ of the domain; we then get from its definition that $\rho_c^0$ (resp. $\rho_c^1$) is increasing (resp. decreasing) w.r.t $w_m$. In fact we have
\[
    1 - \rho_c^0 \simeq \rho_c^1 \simeq \frac{1}{2} \left( 1 + \sqrt{1 - w_m} \right) \quad \text{and} \quad \mathrm{J} \simeq \frac{1}{4}w_m\,,
\]
so that $\mathrm{J}$ grows linearly with the width of the neck and eventually reaches $1/4$ for $w_m = 1$, the maximum value for a straight channel.
This is in agreement with the numerical observations described above.\\

To summarize, the influence of the width of the neck in this case is two-fold. First, in terms of $(\alpha, \beta)$, the green region grows larger and eventually completely fills the parameter space as the neck-width goes to zero. Second, it is in this region that $w_m$ has a noticeable effect on the flux $\mathcal{J}$, which depends linearly on $w_m$, as one would expect intuitively.

\begin{remark}
The observations above are independent of the choice of $k$, provided that $k(0) = k(1) = 1$ and that the minimum of $k$ is non degenerate. In the singular case, one obtains an explicit expression for $\mathrm{J}$, which we write as $\mathrm{J}_{k}(\alpha, \beta)$ to emphasize the dependency on $k$, $\alpha$ and $\beta$:
\[
\mathrm{J}_k(\alpha, \beta) =
\begin{cases}
\alpha (1-\alpha) & \alpha \le \rho_c^0 \wedge \alpha \le \beta\,, \\
\beta (1-\beta) & \beta \le 1 - \rho_c^1 \wedge \beta \le \alpha\,, \\
\frac{1}{4} \min_\xi k(\xi) & \text{otherwise}\,.
\end{cases}
\]
In particular, we have that $\mathrm{J}_k(\alpha, \beta) = \min \left\{ \mathrm{J}_1(\alpha, \beta), \min_\xi k(\xi) \right\}$. This is illustrated in Figure~\ref{fig:flow geometry}. This means that as $\min_\xi k(\xi)$ decreases, the flow $\mathrm{J}_k(\alpha, \beta)$ will saturate, \emph{i.e.} reach its maximum, faster as $\alpha$ and $\beta$ increase. The maximum of $\mathrm{J}_k$ will also decrease linearly with $\min_\xi k(\xi)$. Numerical experiments with two narrow sections of varying width suggest that this applies also for functions $k$ with several (nondegenerate) critical points.
\end{remark}

\begin{figure}
    \begin{center}
        \includegraphics[width=0.4\textwidth]{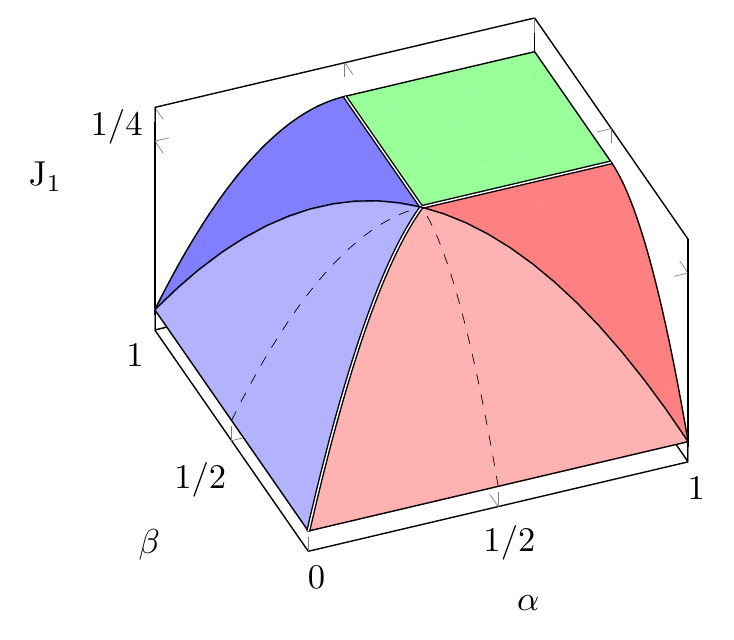}
        \hspace*{2cm}
        \includegraphics[width=0.4\textwidth]{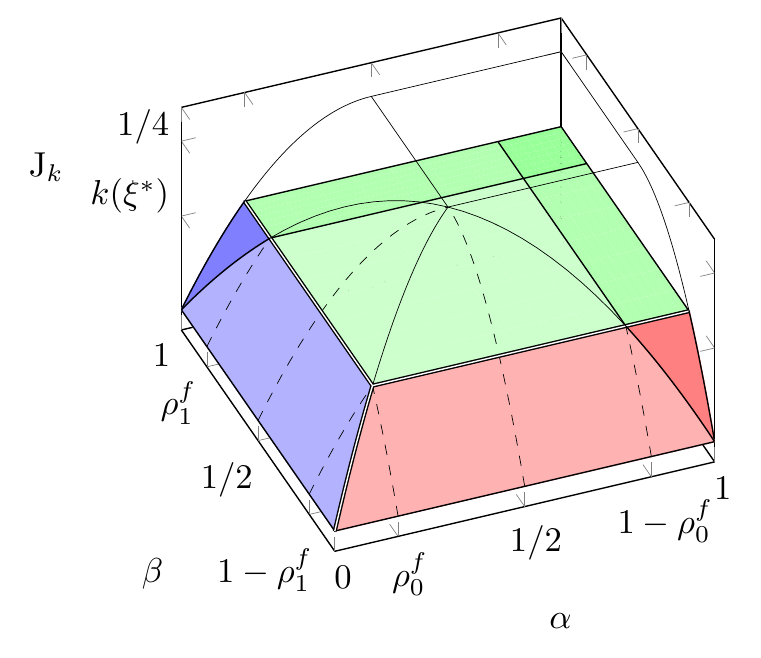}
    \end{center}
    \caption{Illustration of the flow $\mathrm{J}_1$ (solid colors on the left, wireframe on the right) and $\mathrm{J}_k$ (solid colors on the right) as a function of $\alpha$ and $\beta$. Recall that $\rho_c^{0,1} \rightarrow \frac{1}{2}$ as $k(\xi^*) \rightarrow 1$, so that from the green rectangles, only the darker one (top right) remains in the limit.}
    \label{fig:flow geometry}
\end{figure}

\section*{Conclusion}


In this work, we investigate the steady-states of a 1D area averaged model describing pedestrian dynamics for unidirectional flows in domains that have a bottleneck. In the proposed model, information about the geometry enters as a nonhomogeneous factor acting both on the diffusive and convective terms. We investigate the case in which this factor admits an isolated minimum, which corresponds to the bottleneck. The stationary profiles exhibit a multi-scale nature, which we analyse using GSPT. This allows us to thoroughly understand the influence of inflow and outflow rates ($\alpha$ and $\beta$, respectively) on the structure of the solutions and, in particular, on the formation of boundary layers. In this framework, the isolated minimum inside the bottleneck corresponds to a canard point where an unusual passage through a repelling branch of the critical manifold occurs. The more complex geometry therefore induces the emergence of two additional regions in the singular bifurcation diagram which have not been observed and investigated before. In general, orbits which include such passage exist for a wide area in the $(\alpha, \beta)$-parameter space, whose size decreases as the neck becomes wider. \\

In order to test the ability of our 1D reduction to capture the essential dynamics of the original two-dimensional model, we plan to suitably calibrate and validate our model as a next step. As observed in \cite{Iuorio_2022}, the quality of the proposed 1D area averaged approximation depends on the parameter regime considered; we will therefore investigate further averaging assumptions to overcome these issues in the next steps of our research.

\section*{Declaration of competing interests}
The authors declare no conflict of interest.

\section*{Acknowledgements}
AI acknowledges support from an FWF Hertha Firnberg Research Fellowship (T 1199-N).

\bibliography{ms}{}
\bibliographystyle{abbrv}

\clearpage

\end{document}